\numberwithin{equation}{section}
\theoremstyle{plain}
\newtheorem{Th}{Theorem}[section]
\newtheorem{Lemma}[Th]{Lemma}
\newtheorem{Cor}[Th]{Corollary}
\newtheorem{Prop}[Th]{Proposition}
\newtheorem*{Theorem-non}{Theorem}
\newtheorem*{Theorem-non2}{Theorem}
 \theoremstyle{definition}
 \newtheorem*{Proof-non}{Proof of Theorem \ref{Maintheorem} assuming Propositions \ref{Prop1},\ref{Propm}}
\newtheorem*{Proof-non2}{Proof of (1)  ($\bf{m_{1}}$-estimate) in Proposition \ref{Propm} assuming Proposition \ref{Proposition 5.1}}
\newtheorem*{Proof-non3}{Proof of Theorem \ref{Maintheorem2} assuming Propositions \ref{Prop1},\ref{Propm}}
\newtheorem*{Proof-non4}{Proof of Proposition \ref{Prop1}}
\newtheorem*{Proof-non5}{Proof of Proposition \ref{Propm}}
\newtheorem{Def}[Th]{Definition}
\newtheorem{Rem}[Th]{Remark}
\newtheorem{?}[Th]{Problem}
\newcommand{\RomanNumeralCaps}[1]
    {\MakeUppercase{\romannumeral #1}}
\begin{document}

\author{Jiseong Kim}
\address{The University of Mississippi, Department of Mathematics
Hume Hall 335
Oxford, MS 38677}
\email{Jiseongk51@gmail.com}
\title{on Asymptotics of Shifted Sums of Dirichlet convolutions}

\begin{abstract} 
The objective of this paper is to obtain asymptotic results for shifted sums of multiplicative functions of the form $g \ast 1$, where the function $g$ satisfies the Ramanujan conjecture and has conjectured upper bounds on square moments of its L-function. We establish that for $H$ within the range $X^{23/24+10\varepsilon} \leq  H \leq X^{1-\varepsilon}$, there exist constants $B_{f,h}$ such that $$ \sum_{X\leq n \leq 2X} f(n)f(n+h)-B_{f,h}X=O_{f,\varepsilon}\big(X^{1-\varepsilon^{2}/4}\big)$$ for all but $O_{f,\varepsilon}\big(HX^{-\varepsilon^{2}/3}\big)$ integers $h \in [1,H].$
Our method is based on the Hardy-Littlewood circle method. In order to treat minor arcs, we use the convolution structure and a cancellation of $g(n)$ that are additively twisted, applying some arguments from a paper of Matom\"a{}ki, Radziwi\l{}\l{} and Tao. 
Also, we establish an upper bound for weighted exponential sums, which may be of independent interest.

\end{abstract}

\maketitle

\section{\rm{Introduction}}
When we deal with arithmetic functions in analytic number theory, an interesting task is to estimate the correlation expressed as $$\sum_{n \leq X} a(n)a(n+h)$$ where $a:\mathbb{N} \rightarrow \mathbb{C}$ is an arithmetic function. Also, this problem has attracted substantial interest when $a(n)$ is a coefficient of an L-function, due to its relevance in estimating moments of L-functions and establishing subconvexity bounds (for example, see \cite{Blomerv}). 

Let $\mathbb{U}$ denote the unit disc. When $a(n)$ is a bounded multiplicative function $a:\mathbb{N} \rightarrow \mathbb{U},$ we have a relatively good understanding of how to estimate these correlations. For example, in \cite{Klurman}, it is proved that for any pretentious bounded multiplicative function $a: \mathbb{N} \rightarrow \mathbb{U}$ such that a certain modified distance between $a(n)$ and $1$ is small, we have 

$$\sum_{n \leq x} a\left(P(n)\right) a\left(Q(n)\right) = x \prod_{p \leq x} \left(\frac{1}{x}\lim_{x \rightarrow \infty} \sum_{n \leq x} a_{p}\left(P(n)\right) a_{p}\left(Q(n)\right)\right)+O(1/\log \log X)$$ where $P,Q \in \mathbb{Z} [x],$ and $a_{p}\left(q^{k}\right)=a(q^{k})$ if $q=p,$ and $1$ otherwise (for details, see \cite[Theorem 1.3]{Klurman}). Note that the results proven in \cite{Klurman} address more general cases. However, when $a(n)$ is not bounded, tackling the problem becomes notably more challenging. a classical example of unbounded cases is the divisor function. Let $$ d_{k}(n):= \sum_{a_{1}a_{2}...a_{k}=n, \atop a_{i} \in \mathbb{N}} 1.$$  Deshouillers and Iwaniec \cite{DeshIwan} proved that 
$$
\sum_{X \leq n \leq 2X} d_{2}(n)d_{2}(n+h)=P_{2}(\log X)X +O(X^{\frac{2}{3}+o(1)})
$$
as $X \rightarrow \infty$, where $P_{2}(x)$ is a quadratic polynomial with coefficients depending on $h$. For $k=3,$ Baier, Browning, Marasingha, and Zhao \cite{BBMZ} proved that when $X^{\frac{1}{3}+\epsilon} \leq H \leq X^{1-\epsilon},$ for all but $O_{\epsilon}(HX^{-\delta})$ integers $|h|<H,$ 
\[
\sum_{X \leq n \leq 2X} d_{3}(n)d_{3}(n+h)=P_{3}(\log X)X + O(X^{1-\delta})
\]
where $P_{3}(x)$ is a quartic polynomial with coefficients depending on $h$, and for some $\delta>0.$ For the higher-order divisor functions, Matom\"a{}ki, Radziwi\l{}\l{}, and Tao proved the following averaged divisor correlation conjecture \cite{MRT1}. 

\begin{Theorem-non} \cite[Theorem 1.3, (ii)]{MRT1}
Let $A>0$, and let $0<\epsilon< \frac{1}{2}$. Let $k,l \geq 2$ be fixed. Suppose that $X^{\frac{8}{33}+\epsilon} \leq H \leq X^{1-\epsilon}$ for some $X\geq 2$. Let $0\leq h_{0} \leq X ^{1-\epsilon}$.
Then for each $h,$ there exists a polynomial $P_{k,l,h}$ of degree $k+l-2$ such that
$$
\sum_{X \leq n \leq 2X} d_{k}(n)d_{l}(n+h) = P_{k,l,h}(\log X)X + O_{A,\epsilon, k,l}\big(X(\log X)^{-A}\big) \quad \textrm{as} \quad X \rightarrow \infty
$$
for all but $O_{A,\epsilon,k,l}\big(H (\log X)^{-A}\big)$ values of $h$ with $|h-h_{0}| \leq H$. 
\end{Theorem-non} 
 Their methods apply to situations where arithmetic functions can be expressed as Dirichlet convolutions involving other multiplicative functions that have good cancellation (see \cite{MRT1}). This method also shares a similar philosophy, assuming the Siegel-Walfisz property on one of the factors (for example, see \cite{FouvryTenenbaum}).
Building upon their techniques, in \cite{Kim2022s}, we obtained the asymptotic behavior of correlations involving the squares of Hecke eigenvalues of holomorphic cusp forms. In another direction, one may be interested in \cite{Matthiesen}. Recently, in \cite{MSTJ2023} and its sequel, the authors proved various correlations of arithmetic functions that involve certain types of combinatorial decompositions, such as Heath-Brown-type identities, where the factors are either $1$, $\mu$, or the von Mangoldt function $\Lambda$. Their methods are effective when one can control certain non-periodic factors (such as $\mu$ or $\Lambda$).

In this paper, we explore an alternative approach that considers a different class of multiplicative functions, which may not be accessible through their methods.
\begin{Def} For $k \in \mathbb{N}$, let $\mathcal{F}_{k}$ denote the class of multiplicative functions $g:\mathbb{N} \rightarrow \mathbb{C}$ that satisfy the following properties:
\begin{enumerate}
    \item (Ramanujan Conjecture) Let $L(g,s):=\sum_{n=1}^{\infty} \frac{g(n)}{n^{s}}.$ $L(g,s)$ has an Euler product 
    $$ \prod_{i=1}^{k} \prod_{p} (1-\alpha_{i}/p^{s})^{-1}$$ for some $|\alpha_{i}| = 1.$ From this, it is easy to see that $g(n) \leq d_{k}(n).$ 
    \item  Let $\displaystyle L(g,\chi,s):= \sum_{n=1}^{\infty} \frac{g(n)\chi(n)}{n^{s}}$, where $\chi$ is a Dirichlet character modulo $q.$ The function $L(g,\chi,s)$ is holomorphic and has an analytic continuation for $\Re(s)>1/2$. It is nonzero when $\Re(s)\geq 1$ and does not have any poles.
Furthermore, it satisfies the conjectured bound such that 
\begin{equation}\label{squareup}\int_{T}^{2T} \left|L(g,\chi,\sigma+it)\right|^{2}dt \ll_{\varepsilon} q^{\varepsilon^{2}}\left(T+1\right)^{1+\varepsilon^{2}},\end{equation} for any $\varepsilon>0$, $1/2 \leq \sigma <1$, and $T>0$. 
\end{enumerate}
\end{Def} 
For details on the conjectured upper bounds on the square moments of L-functions, see \cite{Conrey}. Note that the conjectured upper bound on the square moments is a much weaker condition than the generalized Lindel\"of  hypothesis or the generalized Riemann hypothesis. Assuming 
\begin{equation}\int_{T}^{2T} \left|L(g,\chi,\sigma+it)\right|^{2}dt \ll_{\varepsilon} q^{1-\delta} \left(T+1\right)^{1+\varepsilon^{2}}\nonumber\end{equation} for some $\delta>0,$ instead of $\eqref{squareup}$, still allows us to obtain weaker versions of Theorem \ref{Maintheorem}.
\begin{Rem}
Note that one can remove the condition on the pole since the condition is only relevant to major arc estimates, and it could be resolved easily.
\end{Rem}
\begin{Def} let $\mathcal{E}_{k}$ denote the class of multiplicative functions $f:\mathbb{N} \rightarrow \mathbb{C}$ such that
$$f(n) =  g \ast 1 (n) := \sum_{d|n} g(d)$$ for some $g \in \mathcal{F}_{k}.$ Note that $f(n) \leq d_{k+1}(n).$
\end{Def} \noindent In this paper, we prove the following theorem.
\begin{Th}\label{Maintheorem} Let $\varepsilon$ be a fixed small positive constant. Let $f \in \mathcal{E}_{k}.$ Suppose $X^{23/24+10\varepsilon} \leq$ $H \leq X^{1-\varepsilon}$. There are constants $B_h$ such that as $X \rightarrow \infty$,
$$\sum_{X \leq n \leq 2 X} f(n) f(n+h)-B_h X=O_{f,k, \varepsilon}\left(X^{1-\varepsilon^{2}/4}\right)$$
for all but $O_{f,k,\varepsilon}\left(HX^{-\varepsilon^{2}/3}\right)$ integers $h \in[1, H]$.
\end{Th}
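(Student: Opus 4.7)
The approach is the Hardy--Littlewood circle method. Introduce smooth bump functions $\psi_1,\psi_2$ adapted to $[X,2X]$, and expand
$$\sum_{n} \psi_1(n)\psi_2(n+h) f(n)f(n+h) = \int_0^1 F_1(\alpha)\overline{F_2(\alpha)}\, e(-h\alpha)\,d\alpha,$$
where $F_j(\alpha) := \sum_n \psi_j(n) f(n) e(n\alpha)$. Partition $[0,1]$ into major arcs $\mathfrak{M}$ (neighborhoods of rationals $a/q$ with $q\le Q$ for some $Q=X^\delta$) and the minor arcs $\mathfrak{m}$. The constants $B_h$ will be read off from the major-arc analysis.

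On $\mathfrak{M}$, use the convolution structure $f=g\ast 1$. For $\alpha = a/q+\beta$, split $d,e$ by their residues modulo $q$ so that
$$F_j(a/q+\beta) = \sum_{d} g(d) \sum_{e} \psi_j(de) e\bigl(de(a/q+\beta)\bigr)$$
factors into a Gauss/Ramanujan-type finite sum times a smooth oscillatory integral. Combining with the analogous expansion of $\overline{F_2}$ and summing $a$ over residues $\pmod q$ produces, via the Euler product for $L(g,\chi,s)$ and the Ramanujan bound $|g(d)|\le d_k(d)$, a singular series in $h$ times a singular integral equal to $X$, i.e.\ a main term of the form $B_h X$. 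The absolute convergence of the singular series is assured by the Ramanujan condition.

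The core of the proof is the minor-arc contribution. On $\mathfrak{m}$, use Parseval--Chebyshev to convert to an $L^2$-average in $h$:
$$\sum_{h\in[1,H]} \Bigl| \int_{\mathfrak{m}} F_1(\alpha)\overline{F_2(\alpha)} e(-h\alpha)\,d\alpha \Bigr|^2 \;\ll\; \int_{\mathfrak{m}} |F_1(\alpha)|^2 |F_2(\alpha)|^2\,d\alpha.$$
A Chebyshev argument then extracts the exceptional set of size $O(HX^{-\varepsilon^2/3})$, provided one has a sufficiently strong fourth-power bound for $F_j$ on $\mathfrak{m}$. To obtain this, decompose via $f=g\ast 1$ into a type-I and a type-II regime: fix a threshold $D$ and write
$$F_j(\alpha) = \sum_{d\le D} g(d)\sum_e \psi_j(de) e(de\alpha) \;+\; \sum_{e\le X/D}\Bigl(\sum_{d} g(d)\psi_j(de) e(d\cdot e\alpha)\Bigr).$$
In the first piece the inner linear exponential sum in $e$ is controlled by $\min(X/d,\|d\alpha\|^{-1})$, and Dirichlet's theorem together with $\alpha\in\mathfrak{m}$ gives the needed saving. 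In the second piece, for each fixed $e$ the sum $\sum_d g(d)e(d\cdot e\alpha)$ is treated as an additively twisted partial sum of $g$; by a Perron/Mellin contour shift one expresses it through $L(g,\chi,s)$ for Dirichlet characters $\chi$ of modulus comparable to the denominator of the rational approximation to $e\alpha$, and the conjectured second-moment bound \eqref{squareup} then delivers the crucial square-root-type cancellation, in the spirit of Matom\"aki--Radziwi\l{}\l{}--Tao.

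The main obstacle is precisely this type-II minor-arc estimate. The dyadic threshold $D$, the major-arc modulus $Q$, and the Diophantine radius of $\mathfrak{M}$ must be balanced so that the $L^2$-saving from \eqref{squareup} absorbs both the $h$-dispersion loss $X^{\varepsilon^2/3}$ and the loss coming from the $L^4$-versus-$L^2$ conversion on $\mathfrak{m}$. The hypothesis $H\ge X^{23/24+10\varepsilon}$ appears as the optimization threshold at which the Parseval--Chebyshev extraction remains viable with the available cancellation; any improvement of $23/24$ would demand either sharper moment bounds for $L(g,\chi,s)$ or a finer exploitation of the multiplicative structure of $g$. Combining the major-arc main term with the minor-arc error on the complementary set of $h$ yields the claimed asymptotic.
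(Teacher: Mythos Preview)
Your outline has the right architecture (circle method, $f=g\ast 1$, Chebyshev over $h$), but the minor-arc reduction you propose is not the one that works here, and the gap is structural rather than cosmetic.

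You pass from the $h$-average to an $L^4$ estimate via Bessel:
\[
\sum_{1\le h\le H}\Bigl|\int_{\mathfrak m}|S_f(\alpha)|^2 e(h\alpha)\,d\alpha\Bigr|^2 \le \int_{\mathfrak m}|S_f(\alpha)|^4\,d\alpha.
\]
This throws away the parameter $H$ entirely, and the Chebyshev step then demands $\int_{\mathfrak m}|S_f|^4\ll HX^{2-\delta}$, i.e.\ a genuine power saving of size roughly $X^{1/24}$ in the minor-arc fourth moment when $H\approx X^{23/24}$. Nothing in your sketch produces such a bound, and the inputs you invoke (the second-moment hypothesis \eqref{squareup}, the geometric-series bound $\min(X/d,\|d\alpha\|^{-1})$) do not yield a useful pointwise estimate $|S_f(\alpha)|\ll X^{1-c}$ on $\mathfrak m$; in fact the large-$d$ piece of $S_f$ is only $O(X)$ pointwise at the edge of the minor arcs with the available tools. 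The paper's route is different: one attaches a smooth majorant $\Phi(h/H)$ and applies Poisson summation in $h$ to obtain
\[
\sum_{h}\Bigl|\int_{\mathfrak m}|S_f|^2 e(h\alpha)\,d\alpha\Bigr|^2 \ll H\int_{\mathfrak m}|S_f(\alpha)|^2\Bigl(\sup_{\theta}\int_{\mathfrak m\cap[\theta-\tfrac{1}{2H},\theta+\tfrac{1}{2H}]}|S_f(\beta)|^2\,d\beta\Bigr)d\alpha,
\]
so that the analytic target becomes the \emph{local} $L^2$ bound
\[
\int_{\mathfrak m\cap[\theta-\tfrac{1}{2H},\theta+\tfrac{1}{2H}]}|S_f(\alpha)|^2\,d\alpha \ll_{f,k,\varepsilon} X^{1-\varepsilon^2+o(1)}
\]
uniformly in $\theta$. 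It is precisely this localisation to windows of length $1/H$ that produces the threshold $H\ge X^{23/24+10\varepsilon}$; an $L^4$ approach never sees it.

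Your handling of the small-$d$ range is also too coarse. The bound $\min(X/d,\|d\alpha\|^{-1})$ alone, summed over $d\le D$ with Dirichlet's theorem, does not give the required local $L^2$ saving when $Q$ is as small as $X^{1/12-10\varepsilon}$. The paper instead converts this piece, via \cite[Corollary~5.3]{MRT1}, into Dirichlet-polynomial integrals over characters modulo $q_1\mid q$, and then runs the Matom\"aki--Radziwi\l\l--Tao combinatorial decomposition into Type~$d_1$ and Type~II sums, using the hybrid mean-value theorem and the fourth-moment bound for Dirichlet $L$-functions. The second-moment hypothesis \eqref{squareup} is used in the \emph{large}-$d$ range (your ``type II''), through an exponential-sum estimate for $\sum_{m\le X}g(m)e(m\alpha)$ obtained by Mellin inversion; this part of your plan is essentially correct, but it feeds into the local $L^2$ bound above, not into an $L^4$ estimate.
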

\begin{Rem} 
Here, we discuss some methods from previous works \cite{MRT1}, \cite{MRT2} and explain why our $H$ is significantly larger than in those works. For divisor functions or Mobius functions, these functions can be regarded as linear combinations of Dirichlet convolutions, whose factors all satisfy good cancellation properties. However, in general, as the squares of Hecke eigenvalues, only one factor satisfies a good cancellation property.

These cancellation properties can be applied in various ways, and even for the factors of divisor functions (which is just the indicator function), the upper bounds of the higher moments of Dirichlet \( L \)-functions are relatively well-studied. Since such results have not yet been proved for general functions, we need to treat certain parts of Dirichlet convolutions differently.

In our previous work \cite{kim2021hecke}, we used Miller's bound on weighted exponential sum estimates, which saves \( X^{-1/4} \), allowing us to obtain asymptotics for the shifted sum of Hecke eigenvalue squares with $ H > X^{2/3} $. Here, we consider more general cases, so our exponential sum estimates do not yield such a strong upper bound.

The method in \cite{MRT2} has very small savings in error terms and highly depends on Shiu's theorem, which is not suitable for many cases. For example, the square of a Hecke eigenvalue has a constant mean value, but the upper bound of individual Hecke eigenvalues is given by the divisor function. Since this individual bound is much larger than the mean value, the method in \cite{MRT2} for minor arc estimates, especially for large value estimates, does not work well.

\end{Rem}

Let $\Omega$ be a GL(n) Maass cusp form, and let $A_{\Omega}(d,1,1, \dots, 1)$ denote the normalized Fourier coefficients of $\Omega$. Define $B_{\Omega}(m) := \sum_{d \mid m} A_{\Omega}(d, 1, 1, \dots, 1)$, which is the $m$-th coefficient of the $L$-function associated with $1 \boxplus \Omega.$ Assuming that $m \rightarrow A_{\Omega}(m, 1, 1, \dots, 1) \in \mathcal{F}_{k}$, we obtain the following corollary from Theorem \ref{Maintheorem}.

\begin{Cor}
Assume that $m \rightarrow A_{\Omega}(m,1,1,...,1) \in \mathcal{F}_{k}.$ Let $\varepsilon$ be a fixed small positive constant. Suppose $X^{23/24+10\varepsilon} \leq$ $H \leq X^{1-\varepsilon}$. There are constants $\mathcal{B}_h$ such that as $X \rightarrow \infty$,

$$\sum_{X \leq n \leq 2 X} B_{\Omega}(n) B_{\Omega}(n+h)-\mathcal{B}_h X=O_{\Omega,k, \varepsilon}\left(X^{1-\varepsilon^{2}/4}\right)
$$
for all but $O_{\Omega,k,\varepsilon}\left(HX^{-\varepsilon^{2}/3}\right)$ integers $h \in[1, H]$.
\end{Cor}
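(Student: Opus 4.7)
The Corollary is an immediate consequence of Theorem~\ref{Maintheorem}, so the plan is to verify that $B_{\Omega}$ belongs to the class $\mathcal{E}_{k}$ under the stated hypothesis and then to quote Theorem~\ref{Maintheorem} verbatim. Writing $a(m) := A_{\Omega}(m,1,1,\dots,1)$, the assumption $a \in \mathcal{F}_{k}$ supplies precisely the two structural ingredients that the class $\mathcal{E}_{k}$ demands of its factor $g$: a degree-$k$ Euler product with Satake parameters of modulus one (so in particular $|a(m)| \le d_{k}(m)$), together with the conjectural twisted square-moment bound \eqref{squareup} for each $L(a,\chi,s)$. By the definition $B_{\Omega}(m) = \sum_{d \mid m} A_{\Omega}(d,1,1,\dots,1)$ we have the convolution identity $B_{\Omega} = a \ast 1$, which together with $a \in \mathcal{F}_{k}$ gives $B_{\Omega} \in \mathcal{E}_{k}$ on the nose.

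With that check in hand, I would apply Theorem~\ref{Maintheorem} directly with $f = B_{\Omega}$ (and $g = a$) in the range $X^{23/24+10\varepsilon} \le H \le X^{1-\varepsilon}$. The constants $\mathcal{B}_{h}$ are simply the $B_{h}$ from the main theorem specialized to this choice of $g$; in practice they will be expressible as a singular series in $h$ built out of the local Euler factors of $L(1\boxplus\Omega,s)$. Both the error $O_{\Omega,k,\varepsilon}(X^{1-\varepsilon^{2}/4})$ and the exceptional-set bound $O_{\Omega,k,\varepsilon}(H X^{-\varepsilon^{2}/3})$ are inherited unchanged, with the dependence on $\Omega$ absorbed through the dependence of Theorem~\ref{Maintheorem} on $g$.

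No genuinely new circle-method analysis is required at the level of the Corollary itself; the substantive work sits at the level of Theorem~\ref{Maintheorem}, which I would attack via Propositions~\ref{Prop1} (major arcs) and~\ref{Propm} (minor arcs) along the lines sketched in the introduction. The main obstacle is therefore not in the Corollary but in checking, for a given $\Omega$, that the hypothesis $a \in \mathcal{F}_{k}$ actually holds: the Ramanujan bound on $A_{\Omega}$ and the square-moment bound for its twists are conditional on standard but deep conjectures about the Langlands parameters of $\Omega$, which is why the Corollary is phrased conditionally. Once those conjectures are granted, the asymptotic for the $B_{\Omega}$-correlation follows with no further argument.
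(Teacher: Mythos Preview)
Your proposal is correct and matches the paper's approach exactly: the paper simply states that the Corollary follows from Theorem~\ref{Maintheorem} once one notes that the hypothesis $A_{\Omega}(\cdot,1,\dots,1)\in\mathcal{F}_{k}$ together with the definition $B_{\Omega}=A_{\Omega}(\cdot,1,\dots,1)\ast 1$ gives $B_{\Omega}\in\mathcal{E}_{k}$. No additional argument is provided or needed.
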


\subsection{Sketch of the proof}
In \cite{MRT1}, the authors applied the good-cancellation property while dealing with estimates for minor arcs through a combinatorial decomposition. This property allows us to achieve cancellations of powers of $\log X$, and the constant function $1$ satisfies this property.
To provide some context for our proof, let us briefly revisit our previous work on the squares of Hecke eigenvalues. These squares, denoted as $\lambda(n)^{2}$, exhibit a convolution structure known as the Hecke relations:
$$\lambda(n)^{2} = \sum_{d|n} \lambda(d^{2}) = \lambda(d^{2}) \ast 1.$$
These relations involve Hecke eigenvalues and the constant function $1$.  However, because we are uncertain whether Hecke eigenvalues have good cancellation, we 
 rely on obtaining sufficient cancellation from the constant function $1$. Consequently, this limitation restricts us to consider Hecke eigenvalues over relatively short intervals.  To address this, we applied Miller's work \cite{Miller1} on the cancellation of the coefficients of symmetric square L-functions that are additively twisted:
 $$\sum_{n=1}^{X} \lambda(n^{2})e(n\alpha)=O_{f,\varepsilon}\left(X^{3/4+\varepsilon} \right).$$
 The methods we employed can be extended to other general multiplicative functions $a(n)$ expressed as $a(n) = b \ast 1(n)$, and similar cancellations in
 $$ \sum_{n=1}^{X} b(n) e(n\alpha).$$
 However, it's good to note that obtaining such cancellations uniformly can be very challenging in general cases. A standard approach to study 
$$ \sum_{n=1}^{X} b(n)e(n \alpha)$$ is to use the Mellin transform, applying the analytic properties of L-functions.  For this, we apply the conjectured upper bounds on square moments of L-functions. This method provides a good upper bound when the parameter $\alpha$ isn't excessively large. 
Since $$\sum_{n=1}^{X}a(n)e(n\alpha)= \sum_{n=1}^{X}  \sum_{m=1}^{X/n} b(m)e(mn \alpha),$$ we can't directly apply the upper bound from the Mellin transform because the inner sum depends on $n$, and when $n$ becomes sufficiently large, the parameter $n \alpha$ becomes too large. However, as mentioned before, we can handle cases when $n$ is sufficiently large, by relying on sufficient cancellation from the constant function $1$. Consequently, we only need to focus on cases where $n$ isn't extremely large, allowing us to get sufficient cancellation from the upper bound obtained through the Mellin transform.

\begin{Rem}\label{Remark 3.3}
Let $\displaystyle q=q_{0}q_{1}, q_{0}=\prod_{i=1}^{l} p_{i}^{\beta_{i}}$ for some primes $p_{i}$. Let $$D(s):= \sum_{n=1 \atop (n,q_{1})=1}^{\infty} \frac{f(q_{0}n)}{n^{s}}.$$
 Then
$$D(s)=L(g,\chi,s) \zeta(s) \times R_{q_{0},q_{1}}(s)$$
 where
$$R_{q_{0},q_{1}}(s):= \prod_{p|q} (1-1/p^{s}) \prod_{i=1}^{k} (1-\alpha_{i}/p^{s}) \prod_{i=1}^{l} (f(p_{i}^{\beta_{i}})+f(p_{i}^{\beta_{i}+1})p_{i}^{-s}+...),$$ and 
$\chi$ is the principal character modulo $q_{1}. $
By using the Ramanujan conjecture, it is easy to see that
$$ \prod_{p|q} (1-1/p^{1/2+it}) \prod_{i=1}^{k} (1-\alpha_{i}/p^{1/2+it}) \ll \prod_{p|q} (1+1/p^{1/2})^{k-1} \ll  (\ln q)^{k-1}. $$
Using the divisor bound $$d_{k+1}\left(p^{\beta}\right)= \binom{\beta+k}{k}\ll_{k} \beta^{k} \prod_{j=1}^{k}\left(1+\frac{k}{\beta}\right)\ll_{k} \beta^{k}, $$ we have 
\begin{equation}\begin{split}\prod_{i=1}^{l} (f(p_{i}^{\beta_{i}})+f(p_{i}^{\beta_{i}+1})p_{i}^{-1/2+it}+...)&\ll_{k} \prod_{i=1}^{l} \beta_{i}^{k}\left(p_{i}^{1/2}/(p_{i}^{1/2}-1)\right) 
\\&\ll_{k} q_{0}^{1/2} \log_{2}(q_{0})^{k} \prod_{i=1}^{l} (1/ (p_{i}^{1/2}-1))
\\&\ll_{k}  q_{0}^{1/2} \log_{2}(q_{0})^{k+1} .\end{split}\end{equation}
Therefore, we have
\begin{equation}\label{3Rq0q1} \begin{split} R_{q_{0},q_{1}}(\frac{1}{2}+it) &\ll_{k} q_{0}^{1/2+o(1)} (\ln q)^{k-1}. \end{split}\end{equation}
Since we assume the upper bound of the square moments of $L(g,\chi,s),$ we have

\begin{equation}\label{q1}\begin{split}\int_{T}^{2T} \left|\frac{D(\sigma+it)}{\zeta(1/2+it)}\right|^{2}dt \ll_{f,k,\varepsilon} 
T^{1+\varepsilon^{2}}q_{0}^{1+o(1)} (\ln q)^{2k-1}.  \end{split}\end{equation}
By using a similar argument, when $\chi$ is a Dirichlet character modulo $q_{1},$ we have
\begin{equation}\label{q2}\int_{T}^{2T} \left|\frac{D(\sigma+it,\chi)}{L(\sigma+it,\chi)}\right|^{2}dt \ll_{f,k,\varepsilon} 
T^{1+\varepsilon^{2}}q_{0}^{1+o(1)} (\ln q)^{2k-2} \end{equation} where
$$D(s,\chi)=\sum_{n=1}^{\infty} \frac{\chi(n)f(q_{0}n)}{n^{s}}.$$
\end{Rem}

\begin{Lemma}\label{Lemma13}
Let $g \in \mathcal{F}_{k}$, and let $0 \leq a < q$. Then, for any $ \alpha \geq \frac{1}{X^{5/6+2\varepsilon}}$, we have
$$ \sum_{m=1}^{X} g(m) e\left(m\left(\frac{a}{q}+\alpha\right)\right) \ll_{g,k,\varepsilon} q^{1/2+\varepsilon^{2}}X^{1+\varepsilon^{2}} |\alpha|^{\frac{1}{2}+\varepsilon^{2}} + X^{5/6+6\varepsilon}$$
\end{Lemma}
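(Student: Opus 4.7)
The plan is to express the exponential sum as a Mellin integral, decompose the additive twist $e(ma/q)$ into Dirichlet characters, and bound the resulting contour integral on the critical line using the hypothesized second moment of $L(g,\chi,s)$. The saving $|\alpha|^{1/2+\varepsilon^2}$ comes from a stationary-phase estimate for the Mellin transform of a smooth cutoff twisted by $e(x\alpha)$, which exploits the additional oscillation introduced by $\alpha$.

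I would first replace the sharp cutoff $\mathbf{1}_{[1,X]}$ by a smooth bump $W$ whose transition near $X$ has width $\eta X$ with $\eta\asymp X^{-1/6+5\varepsilon}$; the smoothing error is bounded trivially via $|g(m)|\le d_k(m)\ll m^\varepsilon$ by $\ll \eta X\cdot X^{\varepsilon}=X^{5/6+6\varepsilon}$, producing the second error term. For the smoothed sum
\[
S_W=\sum_m g(m)\, e(m\beta)\, W(m/X), \qquad \beta=\tfrac{a}{q}+\alpha,
\]
Mellin inversion gives $S_W=\tfrac{1}{2\pi i}\int_{(c)} F(s)\,\tilde W_\alpha(s)\,ds$, where $F(s)=\sum_m g(m) e(ma/q) m^{-s}$ and $\tilde W_\alpha$ is the Mellin transform of $W(x/X)\,e(x\alpha)$. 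Next, stratifying $m$ by $d=(m,q)$ and using Gauss-sum orthogonality to expand $e(ma/q)$ in Dirichlet characters modulo divisors of $q$ writes $F(s)$ as a sum of terms of the shape $\tau(\chi)\, L(g,\chi,s)\, R_\chi(s)$, with the local factors $R_\chi$ at primes dividing $q$ controlled exactly as in Remark~\ref{Remark 3.3}. Then I would shift the contour from $\Re(s)=1+\varepsilon$ to $\Re(s)=1/2$, which is justified by the assumed analytic continuation and nonvanishing of $L(g,\chi,s)$ for $\Re(s)\ge 1$.

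The key analytic input is the stationary-phase bound $|\tilde W_\alpha(\tfrac12+it)|\ll X^{1/2}/\sqrt{X|\alpha|}$, localized on the band $|t|\asymp X|\alpha|$, with rapid decay outside (obtained by repeated integration by parts against the smooth $W$). Combining this with Cauchy--Schwarz and the hypothesized second-moment bound \eqref{squareup} on $L(g,\chi,\tfrac12+it)$ applied across the character expansion of $F$ yields
\[
\int_{|t|\asymp X|\alpha|}|F(\tfrac12+it)|\,dt \ll q^{1/2+\varepsilon^2/2}(X|\alpha|)^{1+\varepsilon^2/2},
\]
and multiplying by the peak size of $\tilde W_\alpha$ produces the main term $q^{1/2+\varepsilon^2}X^{1+\varepsilon^2}|\alpha|^{1/2+\varepsilon^2}$. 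The hypothesis $\alpha\ge X^{-5/6-2\varepsilon}$ is used precisely to guarantee $X|\alpha|\ge X^{1/6-2\varepsilon}\gg 1/\eta$, so that the stationary-phase band dominates the non-stationary contribution from the smoothing.

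The main obstacle I anticipate is the character decomposition uniformly in $q$: carrying out the Gauss-sum expansion and bookkeeping the local factors at primes dividing $q$ along the lines of \eqref{3Rq0q1}, so that combining with the second moment \eqref{squareup} produces exactly the exponent $q^{1/2+\varepsilon^2}$ rather than something larger. A related technical point is calibrating the smoothness class of $W$ so that integration by parts on $\tilde W_\alpha$ produces enough polynomial decay in $t$ to truncate the contour integral cleanly, without inflating the smoothing error past $X^{5/6+6\varepsilon}$.
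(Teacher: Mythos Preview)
Your proposal is correct and follows essentially the same route as the paper: smooth the cutoff with transition width $X^{5/6+5\varepsilon}$, apply Mellin inversion, expand $e(ma/q)$ into Dirichlet characters via Gauss sums as in \eqref{additive}, shift to $\Re(s)=1/2$, localize the Mellin kernel to $|t|\asymp X|\alpha|$ by stationary phase, and finish with Cauchy--Schwarz together with the second-moment hypothesis \eqref{squareup} and the local bound \eqref{3Rq0q1}. The only cosmetic difference is that the paper pairs the factors via $L^2\times L^2$ (computing $\int|B(\tfrac12+it,\alpha)|^2\,dt\ll X\log X$ directly) rather than your $L^\infty\times L^1$ splitting, but the two yield the same bound.
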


\begin{proof}
Let $A(s) := \sum_{n=1}^{\infty} g(n) e\left(\frac{an}{q}\right)n^{-s}$. For simplicity, assume that $(a, q) = 1$. 

It is known that when $(an, q) = 1$, we have
$$ e\left(\frac{an}{q}\right) = \frac{1}{\phi(q)} \sum_{\chi \pmod{q}} \tau(\bar{\chi}) \chi(an), $$
where 
$$ \tau(\chi) := \sum_{1 \leq m \leq q \atop (m, q) = 1} \chi(m) e\left(\frac{m}{q}\right) $$
is the Gauss sum. Therefore, we can write
\begin{equation}\label{additive}
\sum_{n=1}^{\infty} g(n) e\left(\frac{an}{q}\right) n^{-s} = \sum_{q = q_0 q_1} \frac{1}{\phi\left(q_1\right) q_0^s} \sum_{\chi \pmod{q_1}} \tau(\bar{\chi}) \chi(a) \sum_{\substack{n=1 \\ (n, q_1) = 1}}^{\infty} g(q_0 n) \chi(n) n^{-s}.
\end{equation}

Now, let $\psi(x)$ be a smooth, compactly supported function such that the support is the interval $[1 - X^{-1 + \beta}, 2 + X^{-1 + \beta}]$, satisfying $\psi(x) = 1$ for $1 \leq x \leq 2$, $\psi(x) \leq 1$ otherwise, and 
$$ \psi^{(j)}(x) \ll_{j} X^{(1 - \beta)j}, $$ 
where $\beta$ will be chosen later. Define 
$$ B(s, \alpha) = \int_0^{\infty} \psi\left(\frac{x}{X}\right) e(-x\alpha) x^{s - 1} \, dx. $$

Using the Mellin transform, we see that
$$ \sum_{m=1}^{\infty} g(m)e\left(m\left(\frac{a}{q} + \alpha\right)\right)\psi\left(\frac{m}{X}\right) = \frac{1}{2\pi i} \int_{(\sigma')} A(s) B(s, \alpha) \, ds$$
where $\sigma ' = 1 + \frac{1}{\log X}.$
By applying \eqref{additive}, we have
\begin{equation}\label{absa} \int_{(\sigma')} A(s) B(s, \alpha) \, ds = \sum_{q = q_0 q_1} \sum_{\chi \pmod{q_1}} \tau(\bar{\chi}) \chi(a) \frac{1}{\phi\left(q_1\right)} \int_{(\sigma')} \sum_{\substack{n=1 \\ (n, q_1) = 1}}^{\infty} g(q_0 n) \chi(n) (q_0 n)^{-s} B(s, \alpha) \, ds. \end{equation}
Now, let
$$ L(g, \chi, s; q_0, q_1) = \sum_{\substack{n=1 \\ (n, q_1) = 1}}^{\infty} g(q_0 n) \chi(n) n^{-s}. $$

Thus, the right-hand side of \eqref{absa} is
$$ \sum_{q = q_0 q_1}  \sum_{\chi \pmod{q_{1}} \atop \chi \neq \chi_{0}} \tau(\bar{\chi}) \chi(a) \frac{1}{\phi\left(q_1\right)q_0^s} \int_{(\sigma)} L(g, \chi, s; q_0, q_1) B(s, \alpha) \, ds + O\left( \frac{q^{\varepsilon}}{q\alpha} \right) $$
where $\frac{1}{2} \leq \sigma <1,$ the error term arises from the simple pole of $L(g,\chi_{0},s;q_{0},q_{1}).$ Note that when $B(s,\alpha) \ll_{A} t^{-A}$ for sufficiently large $t$ depends on $X$ (see \eqref{smoothed}), so when shifting the line of integration, the contribution from the horizontal segment is negligible.

Next, we split the integral into two parts:
$$ \int_{|t| \geq 3 \pi \alpha X} + \int_{|t| \leq 3 \pi \alpha X} L(g, \chi, s; q_0, q_1) B(s, \alpha) \, d\alpha. $$

For the first integral, we use the result from \cite[Lemma 8.1]{1BKY}, noting that $\left(\psi\left(\frac{x}{X}\right)\right)^{(j)} \ll X^{-j\beta}$. Let $h(x) = t \log x - 2\pi \alpha x$. To show that the first integral is negligible, we use the upper bound in \cite[Lemma 8.1]{1BKY} which is \begin{equation}\label{smoothed} B(s,\alpha) \ll_{A} X^{\sigma'}\min\left(\frac{X  \frac{t}{X}}{\sqrt{t}},\frac{t}{X} X^{\beta}\right)^{-A}, \;\; \; \textrm{for   any  A>0},\end{equation} choosing $\beta = 5/6+5\varepsilon$ to satisfy the conditions given in \cite[Lemma 8.1]{1BKY}. Then by \cite[Lemma 8.1]{1BKY}, the first integral is negligible
(following their notations, $Y=t, X=1, Q= X, U=X^{\beta}, R= \frac{t}{X}.)$

Splitting the square of $B(s,\alpha)$, we have 
\begin{equation}\begin{split}
\int_{|t| \ll  \alpha X} & \left|B(\sigma + it, \alpha)\right|^{2} dt \\& = \int_{|t| \ll  \alpha X} \int_{0}^{\infty} \int_{0}^{\infty} \psi\left(\frac{x}{X}\right) \psi\left(\frac{y}{X}\right) e(-\alpha(x - y))\left(\frac{x}{y}\right)^{it} (xy)^{\sigma - 1} dx \, dy \, dt.
\end{split}\end{equation}

By interchanging the integrals, we have
\begin{equation}
\int_{|t| \ll  \alpha X}  \left|B(\sigma + it, \alpha)\right|^{2} dt \ll X^{2\sigma - 2} \int_{X - X^{\beta}}^{2X + X^{\beta}} \int_{X - X^{\beta}}^{2X + X^{\beta}} \min\left(\alpha X, \frac{1}{|\log(x/y)|}\right) dy \, dx.
\end{equation}

When $\left|x - y\right| < 1/\alpha$, we take $\alpha X$ as the bound for the minimum. Hence,
\begin{equation}
\int_{X - X^{\beta}}^{2X + X^{\beta}} \left(\int_{x - \frac{1}{\alpha}}^{x + \frac{1}{\alpha}} \alpha X \, dy \right) dx \ll X^2.
\end{equation}

Additionally,
\begin{equation}
\begin{split}
\int_{X - X^{\beta}}^{2X + X^{\beta}} \int_{X - X^{\beta}}^{x - \frac{1}{\alpha}} + \int_{x + \frac{1}{\alpha}}^{2X + X^{\beta}} \frac{1}{|\log(x/y)|} \, dy \, dx 
&\ll \int_{X - X^{\beta}}^{2X + X^{\beta}}\left| \left( \int_{X - X^{\beta}}^{x - \frac{1}{\alpha}} + \int_{x + \frac{1}{\alpha}}^{2X + X^{\beta}} \frac{y}{|x - y|} \, dy \right) \right|dx \\
&\ll \int_{X - X^{\beta}}^{2X + X^{\beta}} X \log X \, dx \\
&\ll X^2 \log X.
\end{split}
\end{equation}

Therefore, we conclude that
\begin{equation}
\int_{|t| \ll  \alpha X}  \left|B(\sigma + it, \alpha)\right|^{2} dt \ll X^{2\sigma} \log X.
\end{equation}

For the last integral, by applying H\"{o}lder's inequality and \eqref{q2}, we have
\begin{equation}
\begin{split}
\int_{|t| \ll \alpha X} & L(g, \chi, \sigma + it; q_0, q_1) B(\sigma + it, \alpha) \, ds \\
&\ll \left( \int_{|t| \ll \alpha X} \left|L(g, \chi, \sigma + it; q_0, q_1)\right|^2 \, dt \right)^{1/2} \left( \int_{|t| \ll \alpha X} \left|B(\sigma + it, \alpha)\right|^2 \, dt \right)^{1/2} \\
&\ll (\alpha X)^{1/2 + \varepsilon^2 / 2} X^\sigma (\log^{1/2} X) q_0^{(1 + \varepsilon^2) / 2} q^{o(1)} \\
&\ll \alpha^{1/2 + \varepsilon^2 / 2} X^{\sigma + 1/2 + \varepsilon^2} q_0^{(1 + \varepsilon^2) / 2} q^{o(1)}.
\end{split}
\end{equation}

Thus, we obtain
\begin{equation}
\begin{split}
\int_{(\sigma)} A(s) B(s) \, ds &\ll_{g, k, \varepsilon} \sum_{q = q_0 q_1} \sum_{\chi \pmod{q_1}  \atop \chi \neq \chi_{0}} |\tau(\bar{\chi})| \frac{1}{\phi(q_1)} \frac{q^{o(1)}}{q_0^{\sigma - 1/2 - \varepsilon^2 / 2}} \left( X^{\sigma + 1/2 + \varepsilon^2 / 2} \alpha^{1/2 + \varepsilon^2 / 2} \right).
\end{split}
\end{equation}

Taking $\sigma = 1/2$ and using the bound $\tau(\bar{\chi}) \ll q_1^{1/2}$, this is  bounded by
$$ q^{1/2 + \varepsilon^2} X^{1 + \varepsilon^2 / 2} \alpha^{1/2 + \varepsilon^2 / 2}. $$

After removing the weight function $\psi$ and applying Shiu's theorem \cite{Shiu1}, we have
\begin{equation}
\begin{split}
\sum_{m=1}^{\infty} & g(m) e\left(m\left(\frac{a}{q} + \alpha\right)\right) \psi\left(\frac{m}{X}\right) - \sum_{m=X}^{2X} g(m) e\left(m\left(\frac{a}{q} + \alpha\right)\right) \\
&\ll_\varepsilon \sum_{X - X^{\beta} \leq m \leq X} d_k(m) + \sum_{2X \leq m \leq 2X + X^\beta} d_k(m) \\
&\ll_\varepsilon X^{\beta} \log^{k-1} X.
\end{split}
\end{equation}

\end{proof}
\begin{Rem} Note that, with a slight modification to the argument in the proof of Lemma \ref{Lemma13}, one can obtain a short interval version of the result. We will apply it in an upcoming paper to study triple correlations.
\end{Rem}

\subsection*{Acknowledgements} The author is grateful to the anonymous referee for their helpful comments, especially for pointing out the error in Lemma 1.8 in the previous version. The author would also like to thank Kunjakanan Nath for valuable discussions.

\section{Notations}
In this paper, we assume that $X$ is sufficiently large, and $\varepsilon>0$ is chosen to be arbitrarily small. From now on, we assume that $f \in \mathcal{E}_{k}.$ For any two functions $k(x):\mathbb{R} \rightarrow \mathbb{R}$ and $l(x):\mathbb{R} \rightarrow \mathbb{R}^{+}$, we employ the notation $k(x)\ll l(x)$ or $k(x)=O(l(x))$ to denote the existence of a positive constant $C$ such that $|k(x)| \leq C l(x)$ for all $x$.
For any set $A,$ we use $1_{A}$ to represent the characteristic function of $A$. For convenience, we denote $B_{f,h}$ by $B_{h}.$
When summing over the index $p$, it implies summation over primes. The term $\log_{2}(x)$ refers to the binary logarithm.
Let 
$$
\begin{aligned}
& Q=X^{1/12-10\varepsilon}, \\
& I:=[0,1], \\
& \mathcal{M}:=\bigcup_{1 \leq q \leq Q} \bigcup_{\substack{1 \leq a \leq q \\
(a, q)=1}}\left(a/q-X^{-\frac{5}{6}-2 \varepsilon}, a/q+X^{-\frac{5}{6}-2 \varepsilon}\right), \\
& m:=\mathcal{M} \backslash I:=I \cap \mathcal{M}^c, \\
& \operatorname{Res}_{s=a} f(s):=\text {The residue of } f \text { at } s=a, \\
& B_h:=\sum_{q=1}^{\infty} \sum_{\substack{1 \leq a \leq q \\
(a, q)=1}}\left(\sum_{q=q_0 q_1 } \frac{\mu\left(q_1\right)}{\phi\left(q_1\right) q_0} w_{f, q_0, q_1}\right)^2 e\left(ah/q\right)
\end{aligned}$$
where $w_{f,q_{0},q_{1}}$ will be defined in \eqref{3wfq}. 
We denote 
$$\begin{aligned}
D[g](s) & :=\sum_{n=1}^{\infty} \frac{g(n)}{n^s}, \\
D[g](s, \chi) & :=\sum_{n=1}^{\infty} \frac{g(n) \chi(n)}{n^s}, \\
D[g](s ; q) & :=\sum_{\substack{n=1 \\
(n, q)=1}}^{\infty} \frac{g(n)}{n^s}, \\
D[g]\left(s, \chi, q_0\right) & :=\sum_{n=1}^{\infty} \frac{g\left(n q_0\right) \chi(n)}{n^s}, \\
D[g]\left(s, q_0 ; q_1\right) & :=\sum_{\substack{n=1 \\
\left(n, q_1\right)=1}}^{\infty} \frac{g\left(n q_0\right)}{n^s}.
\end{aligned}$$

\subsection{The circle method}
Let $$S_{f}(\alpha):= \sum_{X \leq n \leq 2X} f(n)e(n\alpha).$$
Note that $f(n) \leq d_{2}(n)^{k+1}.$
Applying the Hardy-Littlewood circle method and the divisor bound, we see that 
$$\int_{I} |S_{f}(\alpha)|^{2}e(h\alpha)d\alpha = \sum_{X\leq n \leq 2X} f(n)f(n+h)+O\big(h\max_{2X-h \leq n \leq 2X} d_{k}(n)d_{2}(n)d_{k}(n+h)d_{2}(n+h) \big).$$
Considering that $d_{k}(n)d_{2}(n)\ll d_{2}(n)^{k} \ll n^{o(1)},$ we can conclude that the error term in the above equation is bounded by $hX^{o(1)}.$

Now, we present two propositions for major arc and minor arc estimates, which will be proved in Section 3 and Section 4, respectively.

\begin{Prop}\label{Prop1}(Major arc estimate) Let $\varepsilon>0$ be sufficiently small. Take $1\leq H \leq X^{1-\varepsilon}$. Then 
\begin{equation}\int_{\mathcal{M}}|S_{f}(\alpha)|^{2}e(h\alpha)d\alpha-B_{h}X= O_{f,k,\varepsilon}\left(HX^{o(1)}+X^{5/6+2\varepsilon} + X^{1+o(1)}Q^{-1+o(1)}\right). \end{equation}
\end{Prop}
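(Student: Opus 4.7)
The plan is a standard major-arc analysis. Parametrize $\alpha \in \mathcal{M}$ as $\alpha = a/q + \beta$ with $1 \le q \le Q$, $(a,q) = 1$, $|\beta| \le \delta := X^{-5/6-2\varepsilon}$, so that
$$ \int_{\mathcal{M}} |S_f(\alpha)|^2 e(h\alpha)\, d\alpha = \sum_{q \le Q}\sum_{(a,q)=1} e(ah/q) \int_{-\delta}^{\delta} |S_f(a/q + \beta)|^2 e(h\beta)\, d\beta. $$
The heart of the matter is to establish an asymptotic
$$ S_f(a/q + \beta) = C_q\, I(\beta) + E(a,q,\beta), \qquad I(\beta) := \int_X^{2X} e(x\beta)\, dx, $$
with $C_q := \sum_{q = q_0 q_1} \tfrac{\mu(q_1)}{\phi(q_1) q_0}\, w_{f,q_0,q_1}$ (independent of $a$), and then substitute this into the integral.

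To derive the asymptotic, decompose each $n \in [X, 2X]$ according to $q_0 = (n,q)$, writing $n = q_0 m$ with $(m, q_1) = 1$ and $q = q_0 q_1$. Since $(a,q_1) = 1$, the exponential factors as $e(na/q) = e(ma/q_1)$, and the standard expansion $e(ma/q_1) = \phi(q_1)^{-1}\sum_{\chi \bmod q_1} \tau(\bar\chi) \chi(a) \chi(m)$ yields
$$ S_f(a/q+\beta) = \sum_{q = q_0 q_1} \frac{1}{\phi(q_1)} \sum_{\chi \bmod q_1} \tau(\bar\chi)\chi(a) \sum_{\substack{X/q_0 \le m \le 2X/q_0 \\ (m,q_1)=1}} f(q_0 m)\chi(m) e(q_0 m\beta). $$
For the principal character $\chi_0$ one has $\tau(\bar\chi_0) = \mu(q_1)$, and evaluating the inner smooth exponential sum of $f(q_0 m)$ via Perron's formula with the Dirichlet series $D[f](s, q_0; q_1) = \zeta(s) L(g,\chi_0,s;q_0,q_1) R_{q_0,q_1}(s)$ from Remark \ref{Remark 3.3} extracts the residue $w_{f,q_0,q_1} := \mathrm{Res}_{s=1} D[f](s, q_0; q_1)$ times $I(\beta)/q_0$. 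Non-principal characters contribute only to $E$ and are controlled exactly as in the proof of Lemma \ref{Lemma13}, using the moment estimate \eqref{squareup} together with $|\tau(\bar\chi)| \le q_1^{1/2}$.

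After substituting the asymptotic, squaring, and integrating, the main diagonal piece becomes
$$ \sum_{q \le Q}\sum_{(a,q)=1}|C_q|^2 e(ah/q) \int_{-\delta}^{\delta} |I(\beta)|^2 e(h\beta)\, d\beta, $$
while cross terms involving $E$ are controlled via Cauchy--Schwarz and a Parseval bound $\int_I |S_f|^2 \ll X^{1+o(1)}$. Extending the $\beta$-integral to $\mathbb{R}$ introduces a tail error $O(1/\delta) = O(X^{5/6+2\varepsilon})$ (from $|I(\beta)| \ll 1/|\beta|$), and on all of $\mathbb{R}$ one has $\int_{\mathbb{R}} |I(\beta)|^2 e(h\beta)\,d\beta = \max(X-|h|,0)$, so replacing $X-|h|$ by $X$ costs at most $O(H X^{o(1)})$ using $\sum_{q,a}|C_q|^2 \ll X^{o(1)}$ from Remark \ref{Remark 3.3}. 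Finally, extending $q$ from $q \le Q$ to $\infty$ completes the singular series to yield $B_h X$, with truncation error $O(X Q^{-1+o(1)})$ coming from the pointwise estimate $|w_{f,q_0,q_1}| \ll q_0^{1+o(1)}$ implicit in Remark \ref{Remark 3.3}.

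The main obstacle is uniform control of $E(a,q,\beta)$, so that the $E \cdot \overline{C_q I}$ and $|E|^2$ cross-terms, after being summed over $(a,q)$ and integrated in $\beta$, stay within the stated error. This is essentially a Perron-formula computation that must be carried out uniformly in both $q_0$ and $q_1$, invoking the moment bound \eqref{squareup} and the Ramanujan bound $g(n) \le d_k(n)$ to handle both the non-principal-character contributions and the shifted line of integration; the saving of $X^{5/6+2\varepsilon}$ matches precisely the length of the major arcs, reflecting that the construction of $\mathcal{M}$ is essentially optimal for this argument.
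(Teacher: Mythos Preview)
Your outline follows the paper's approach: write $S_f(a/q+\beta)=C_q\,I(\beta)+E$ via the character expansion and Perron's formula, square out, integrate over the arc, and complete the singular series. Two technical points need to be fixed, however.

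First, the cross terms cannot be handled by ``Cauchy--Schwarz and the Parseval bound $\int_I|S_f|^2\ll X^{1+o(1)}$'': that combination only shows the cross contribution is $O(X^{1+o(1)})$, which is no saving at all. What is actually needed (and what you acknowledge in your final paragraph) is a \emph{pointwise} bound on $E$. The paper gets this in two steps: it first proves, via Perron and the moment hypothesis \eqref{squareup}, that
\[
\sum_{X\le n\le X'}f(n)e(an/q)-C_q\,(X'-X)\ \ll\ (qX)^{1/2+\varepsilon}
\]
uniformly for $X'\in[X,2X]$ (Lemmas \ref{Lemma 3.4} and \ref{Lemma 3.2}), and then attaches $e(n\beta)$ by partial summation (Lemma \ref{Lemma 3.8}), obtaining
\[
|E(a,q,\beta)|\ \ll\ \Big(|\beta|+\tfrac{1}{X}\Big)X^{3/2+\varepsilon}q^{1/2+\varepsilon+o(1)}.
\]
With this in hand one simply expands $|C_qI+E|^2$ and integrates each term over $|\beta|\le X^{-5/6-2\varepsilon}$; no appeal to Parseval is needed, and the $X^{5/6+2\varepsilon}$ term in the statement comes solely from extending the $\beta$-integral of $|I(\beta)|^2$ to $\mathbb{R}$.

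Second, the bound $|w_{f,q_0,q_1}|\ll q_0^{1+o(1)}$ you attribute to Remark \ref{Remark 3.3} is the estimate for $R_{q_0,q_1}(s)$ at $\sigma=1/2$, and it is far too weak to truncate the singular series: it yields only $|C_q|\ll q^{o(1)}$, so $\sum_{q>Q}c_q(h)|C_q|^2$ does not even converge. The residue $w_{f,q_0,q_1}$ is taken at $s=1$, where the same Euler-product computation gives $|w_{f,q_0,q_1}|\ll_f(\log_2 q)^{O_k(1)}$ (this is \eqref{3wfq}). Consequently $|C_q|\ll q^{-1+o(1)}$ (see \eqref{3dq}), and then $\sum_{q>Q}c_q(h)|C_q|^2\ll Q^{-1+o(1)}$ after exploiting $|c_q(h)|\le(q,h)$, giving the $X^{1+o(1)}Q^{-1+o(1)}$ term.
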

\begin{Prop}\label{Propm}(Minor arc estimate) Let  $\varepsilon>0$  be sufficiently small.  Take $ X^{23/24 +2\varepsilon} \leq H \leq X^{1-\varepsilon}$. Then 
 \begin{equation}\label{Prop2} \int_{m \cap [\theta -\frac{1}{2H}, \theta+\frac{1}{2H}]}
 |S_{f}(\alpha)|^{2} d\alpha \ll_{f,k,\varepsilon}  X^{1-\varepsilon^{2}+o(1)} \nonumber\end{equation} for all $\theta \in [0,1].$
\end{Prop}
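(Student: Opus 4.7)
The plan is to establish a pointwise bound $\sup_{\alpha \in m}|S_f(\alpha)| \ll X^{11/12 + O(\varepsilon)}$ and then integrate trivially against the length $1/H$ of the short arc. The route to the pointwise bound is a Type~I / Type~II decomposition of $S_f$ using the convolution identity $f = g \ast 1$, with Lemma \ref{Lemma13} supplying the cancellation in the Type~II range. Since $H \geq X^{23/24+2\varepsilon}$, combining these ingredients gives $X^{11/6+O(\varepsilon)}/H \ll X^{7/8+O(\varepsilon)}$, which is $O(X^{1-\varepsilon^2 + o(1)})$ for $\varepsilon$ sufficiently small and hence the claim.

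For each $\alpha \in [0,1]$, Dirichlet's theorem at level $X^{5/6+2\varepsilon}$ yields coprime $(a,q)$ with $q \leq X^{5/6+2\varepsilon}$ and $|\beta| := |\alpha - a/q| \leq 1/(qX^{5/6+2\varepsilon}) \leq 1/q^2$; comparing with the definition of $\mathcal{M}$ forces $q > Q$ whenever $\alpha \in m$. Writing $S_f(\alpha) = \sum_d g(d) \sum_{X/d \leq e \leq 2X/d} e(de\alpha)$ and splitting at $d = V := X^{6/7}$, I bound the Type~I range $d \leq V$ by estimating the inner geometric series by $\min(X/d, \|d\alpha\|^{-1})$ and applying the standard Vaughan-type estimate with the $d_k$-weight, giving
$$\sum_{d \leq V} d_k(d)\min\!\bigl(X/d,\,\|d\alpha\|^{-1}\bigr) \ll X^{o(1)}\bigl(X/q + V + q\bigr) \ll X^{11/12 + O(\varepsilon)},$$
using the range $Q < q \leq X^{5/6+2\varepsilon}$ and the chosen $V$.

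For the Type~II range $d > V$, I interchange the order of summation to get $\sum_{e \leq 2X/V}\sum_d g(d) e(d\cdot e\alpha)$, reduce the frequency via $e\alpha \equiv a'/q' + e\beta \pmod 1$ with $q' = q/\gcd(e,q) \leq q$, and apply Lemma \ref{Lemma13} with ``$X$'' replaced by $X/e$ to each inner $d$-sum, obtaining the bound $q'^{1/2+\varepsilon^2}(X/e)^{1+\varepsilon^2}|e\beta|^{1/2+\varepsilon^2} + (X/e)^{5/6+6\varepsilon}$. Summing over $e \leq 2X/V$ and using $|\beta| \leq 1/(qX^{5/6+2\varepsilon})$, the first term telescopes to
$$q^{-\varepsilon^2}X^{13/12 - \varepsilon + O(\varepsilon^2)}V^{-1/2},$$
and the second sums to $X V^{-1/6 + O(\varepsilon)}$; with $V = X^{6/7}$ both are $\ll X^{11/12 + O(\varepsilon)}$.

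The main obstacle is that the hypothesis $|e\beta| \geq (e/X)^{5/6+2\varepsilon}$ of Lemma \ref{Lemma13} can fail for small $e$ and $|\beta|$ close to $0$, as happens when $\alpha$ sits very near a rational $a/q$ with $q > Q$. In this regime the first term in the lemma's bound is already small, and one checks, by revisiting the character decomposition in the proof of Lemma \ref{Lemma13} together with the square-moment hypothesis on $L(g,\chi,s)$ at $\sigma = 1/2$, that the inner $d$-sum still admits the bound $(X/e)^{5/6+6\varepsilon}$. Combining the Type~I and Type~II estimates then gives the desired pointwise bound, and hence
$$\int_{m \cap [\theta - 1/(2H),\,\theta + 1/(2H)]} |S_f(\alpha)|^2\,d\alpha \leq \frac{1}{H}\sup_{\alpha \in m}|S_f(\alpha)|^2 \ll \frac{X^{11/6 + O(\varepsilon)}}{X^{23/24 + 2\varepsilon}} = X^{7/8 + O(\varepsilon)},$$
completing the plan.
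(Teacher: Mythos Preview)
Your Type~II argument has a genuine gap that the proposed fix does not close. With Dirichlet approximation at level $X^{5/6+2\varepsilon}$ you obtain $q>Q$ and $|\beta|\le 1/(qX^{5/6+2\varepsilon})$. The hypothesis $|e\beta|\ge (X/e)^{-5/6-2\varepsilon}$ of Lemma~\ref{Lemma13} rearranges to $e^{1/6-2\varepsilon}\ge (|\beta|X^{5/6+2\varepsilon})^{-1}\ge q$, hence forces $e\ge q^{6+O(\varepsilon)}>Q^{6}\gg X^{1/2-O(\varepsilon)}$; but $e\le 2X/V=2X^{1/7}$, so the hypothesis fails for \emph{every} $e$ in your Type~II range, not just for a few small $e$. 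The claimed replacement bound $(X/e)^{5/6+6\varepsilon}$ for the inner $d$-sum is also not what the character decomposition plus the square-moment hypothesis gives: one only obtains $\sum_{d\le Y}g(d)e(da'/q')\ll q'^{1/2+o(1)}Y^{1/2+\varepsilon}$, and with $q'\le q$ as large as $X^{5/6+2\varepsilon}$ this is of size $X^{5/12+\varepsilon}(X/e)^{1/2+\varepsilon}$; summed over $e\le 2X/V$ it exceeds $X^{11/12}$ for any admissible choice of $V$.

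The structural problem is that your Type~I bound requires $q>Q$ (so that $X/q\ll X^{11/12}$), whereas applying Lemma~\ref{Lemma13} in the Type~II range requires the complementary parameter $|\beta|$ to be at least $X^{-5/6-2\varepsilon}$; a single rational approximation cannot deliver both. The paper instead takes Dirichlet approximation at level $Q$, so that $q\le Q$ and, because $\alpha\in m$, automatically $|\beta|>X^{-5/6-2\varepsilon}$; then Lemma~\ref{Lemma13} applies cleanly to the large-$d$ piece (Lemma~\ref{Lemma 4.2}). For the remaining range $d\le HQ^{-1/2}X^{-\varepsilon^{2}}$ the paper does \emph{not} attempt a pointwise Type~I estimate (which would now fail since $X/q$ can be as large as $X$), but passes to Dirichlet polynomials via the $L^{2}$ lemma of Matom\"aki--Radziwi\l{}\l{}--Tao (Lemma~\ref{Lemma 4.3}) together with a Type~$d_{1}$/Type~II combinatorial decomposition (Lemma~\ref{Lemma 4.5}), crucially exploiting the averaging over the short arc that your trivial integration of a pointwise bound discards.
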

\begin{Proof-non} 
The proof of this basically follows from \cite[Proposition 3.1]{MRT1}.   Since $I=\mathcal{M} \cup m,$ we have 
\begin{equation}\sum_{X\leq n \leq 2X} f(n)f(n+h)- \int_{\mathcal{M}}  |S_{f}(\alpha)|^{2}e(h\alpha)d\alpha  
 \end{equation}
\begin{equation}
=O_{\varepsilon}\left(\left|\int_{m}|S_{f}(\alpha)|^{2}e(h\alpha)d\alpha\right| +hX^{\varepsilon^{2}} \right).\nonumber
\end{equation}
    First, we show that for all but $O_{f,k,\varepsilon}\left(X^{-\varepsilon^{2}/3} H\right)$ many integers $h \in [1,H],$  
\begin{equation}
\int_{m} |S_{f}(\alpha)|^{2}e(h\alpha)d\alpha \ll_{f,k,\varepsilon} X^{1-\varepsilon^{2}/4}. \nonumber \end{equation}
Using the Chebyshev inequality, it suffices to show that 
\begin{equation}\label{minorbound}\sum_{1 \leq h \leq H} \left|\int_{m}|S_{f}(\alpha)|^{2}e(h\alpha) d\alpha\right|^{2} \ll_{f,k,\varepsilon} H X^{2-5\varepsilon^{2}/6} .\end{equation}
Let $\Phi(x): \mathbb{R} \rightarrow \mathbb{R}$ be an even non-negative Schwarz function such that 

$$\Phi(x) \geq 1 \thinspace \textrm{\thinspace for \thinspace $x \in [0,1]$},$$
$$\widehat{\Phi}(s):= \int_{\mathbb{R}} \Phi(x)e(-xs)dx =0 \thinspace \thinspace \textrm{except \thinspace for \thinspace $s \in [-\frac{1}{2},\frac{1}{2}]$}.$$ 
By the Poisson summation formula, we have
$$\sum_{h} e(h(\alpha-\beta))\Phi(h/H)= H \sum_{k} \widehat{\Phi}(H(\alpha-\beta-k)).$$
Therefore, after attaching $\Phi(h/H)$ and squaring out, we have 
\begin{equation}\label{eq24}
\begin{split}
\sum_{1 \leq h \leq H} \left|\int_{m}|S_{f}(\alpha)|^{2}e(h\alpha) d\alpha \right|^{2} \ll H\int_{m} |S_{f}(\alpha)|^{2} \int_{m \cap [\alpha-\frac{1}{2H}, \alpha+\frac{1}{2H}]} |S_{f}(\beta)|^{2}d\beta d\alpha. \end{split}\end{equation} 
By applying the divisor bound $f(n) \leq d_{2} (n)^{k+1},$ we have
$$ \sum_{X \leq n \leq 2X} d_{2}(n)^{2k+2} \ll X\prod_{p}\left(1+(2^{2k+2}-1)/p\right).$$
Therefore, by applying Shiu's theorem (see \cite{Shiu1}), we see that
\begin{equation}\label{minorsimple} \int_{m} |S_{f}(\alpha)|^{2}d\alpha \ll \sum_{n=X}^{2X} d_{2}(n)^{2k+2} \ll X (\log X)^{2^{2k+2}-1}.  
    \end{equation}
Combining \eqref{eq24} and \eqref{minorsimple}, the left-hand side of $\eqref{minorbound}$ is bounded by   
\begin{equation}\begin{split}
&\ll_{f} HX(\log X)^{2^{2k+2}-1} \sup_{\alpha\in {m}}\int_{m\cap [\alpha-\frac{1}{2H}, \alpha+\frac{1}{2H}]} |S_{f}(\beta)|^{2}d\beta.
\nonumber
\end{split}
\end{equation}
Assuming Proposition \ref{Propm}, this is bounded by 
$$ \ll_{f,k,\varepsilon} HX^{2-\varepsilon^{2}+o(1)}. $$
Therefore, for all but $O_{f,k,\varepsilon}\left(X^{-\varepsilon^{2}/3} H\right)$ many $h \in [1,H],$
$$\sum_{X\leq n \leq 2X} f(n)f(n+h)- \int_{\mathcal{M}}  |S_{f}(\alpha)|^{2}e(h\alpha)d\alpha  =O_{f,\varepsilon}\left(X^{1-\varepsilon^{2}/4}+HX^{o(1)}\right).$$
By Proposition \ref{Prop1} and the condition $H \ll X^{1-\varepsilon},$ we have 
$$\sum_{X\leq n \leq 2X} f(n)f(n+h)- B_{h}X=O_{f,k,\varepsilon}\big(X^{1-\varepsilon^{2}/4} \big)$$
for all but $O_{f,k,\varepsilon}\left(X^{-\varepsilon^{2}/3} H\right)$ many $h \in [1,H].$ \qed
\end{Proof-non}

\section{\rm{Major arc estimates}}
\noindent In this section, we prove the major arc estimates. 
We apply some analytic properties of L-functions twisted by Dirichlet characters to get the average values of $f(q_{0}n)\chi(n)$ in Lemma \ref{Lemma 3.4}, Lemma \ref{Lemma 3.2}.
Attaching $e(n\beta)$ to $f(n)e(\frac{a}{q}n)$, we get
$$\sum_{X\leq n\leq 2X} f(n)e(\frac{a}{q} n)e(\beta n) \asymp \int_{X}^{2X}\sum_{q=q_{0}q_{1}}\frac{\mu(q_{1})}{\phi(q_{1})q_{0}} w_{f,q_{0},q_{1}}e(\beta x) dx$$ (see Lemma \ref{Lemma 3.8}). Note that the results in this section are very similar to the results in \cite{Kim2022s}. For completeness, we include the proof. From now on we regard $q=q_{0}q_{1}.$ 

By the assumptions, we can use the following lemma.
\begin{Lemma}\label{moving} Assume \eqref{squareup}. Then for any sufficiently large $T$, there exists $T_{0} \in [T,2T]$ 
such that 
$$\sup_{\sigma \in [1/2,1]}\left|L(g,\chi,\sigma+iT_{0})\right| \ll_{\varepsilon} (qT)^{\varepsilon^{2}}$$
\end{Lemma}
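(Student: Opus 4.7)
The plan is to combine the second-moment assumption \eqref{squareup} with Chebyshev's inequality and a discretization in $\sigma$, then extend the resulting grid bound to the continuum via Cauchy's derivative estimate.

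First, I partition $[1/2, 1]$ into a grid $\sigma_1 = 1/2 < \sigma_2 < \cdots < \sigma_N \leq 1$ of spacing $\delta := 1/N$, where $N := \lceil (qT)^{\varepsilon^2/2}\rceil$. For each $\sigma_j$, Chebyshev's inequality applied to \eqref{squareup} with threshold $(qT)^{\varepsilon^2}$ shows that the exceptional set
$$E_j := \bigl\{t \in [T, 2T] : |L(g, \chi, \sigma_j + it)| > (qT)^{\varepsilon^2}\bigr\}$$
has measure $|E_j| \ll q^{-\varepsilon^2}T^{1-\varepsilon^2}$. A union bound over the $N$ grid points yields
$$\Bigl|\bigcup_j E_j\Bigr| \ll N \cdot q^{-\varepsilon^2}T^{1-\varepsilon^2} = (qT)^{-\varepsilon^2/2}\,T < T/2$$
for $qT$ sufficiently large, so one may pick $T_0 \in [T, 2T]$ lying outside $\bigcup_j E_j$. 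For this choice of $T_0$, the bound $|L(g,\chi,\sigma_j + iT_0)| \leq (qT)^{\varepsilon^2}$ holds simultaneously at every grid point, including at $\sigma_1 = 1/2$.

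Next, to upgrade the grid bound to a uniform estimate on the continuum $[1/2,1]$: given any $\sigma$, let $\sigma_j$ be the closest grid point (so $|\sigma-\sigma_j|\leq \delta/2$), and write
$$L(g,\chi,\sigma+iT_0) = L(g,\chi,\sigma_j+iT_0) + \int_{\sigma_j}^{\sigma}\partial_u L(g,\chi,u+iT_0)\,du.$$
Cauchy's integral formula applied on a circle of radius $r$ around $u+iT_0$ lying inside the holomorphy strip $\{\Re s > 1/2\}$ yields $|\partial_u L(g,\chi,u+iT_0)| \leq r^{-1}\sup_{|s-u-iT_0|=r}|L(g,\chi,s)|$. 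The sup on that circle is in turn controlled by the sub-mean-value inequality for the subharmonic function $|L|^2$ combined with a local version of \eqref{squareup} obtained via Fubini over a short $t$-window around $T_0$ (which one can arrange to be $\ll (qT)^{\varepsilon^2}$ by an additional Chebyshev step included in the choice of $T_0$). Choosing $r$ comparable to the grid spacing $\delta$ balances the interpolation error and yields the target pointwise bound $(qT)^{\varepsilon^2}$.

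The main technical obstacle is the narrow slab where $\sigma$ is within $\delta$ of $1/2$, since the admissible Cauchy radius shrinks toward zero as $u \to 1/2^+$, so the naive interpolation threatens to diverge. Here I exploit that $\sigma_1=1/2$ is itself a grid point (so $|L(g,\chi,1/2+iT_0)|\leq (qT)^{\varepsilon^2}$ is already available) and handle this slab by a direct two-dimensional mean-value estimate for $|L|^2$ on a half-disk adjacent to the critical line, with the relevant local mean-square average pre-ensured to be $\ll (qT)^{\varepsilon^2}$. This is the most delicate step; all other parts of the argument are essentially a routine Chebyshev + Cauchy combination.
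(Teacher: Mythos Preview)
The paper does not actually prove this lemma: it simply cites \cite[Lemma~2]{Ramsan} and moves on. So there is no detailed argument in the paper to compare against, and your proposal is an attempt to reconstruct such an argument from scratch.

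Your overall strategy (Chebyshev on the second moment plus a subharmonic mean-value step) is indeed the standard route, but the implementation you outline is redundant in a way that creates a real gap. Notice that your step~4 --- controlling $\sup_{|s-u-iT_0|=r}|L(s)|$ via the sub-mean-value inequality together with a local two-dimensional $L^2$ bound that you build into the choice of $T_0$ --- already yields the pointwise bound $|L(\sigma+iT_0)|\ll (qT)^{\varepsilon^2}$ directly, for every $\sigma>1/2+\rho$ with $\rho\asymp (qT)^{-\varepsilon^2/2}$. Once you have that, the grid discretization and the Cauchy interpolation are superfluous: you never needed the grid points in the first place. The cleaner argument is simply to choose $T_0$ so that $\int_{T_0-1}^{T_0+1}\int_{1/2}^{2}|L(\sigma+it)|^2\,d\sigma\,dt\ll (qT)^{\varepsilon^2}$ (one Chebyshev step) and then apply sub-mean-value on a disk of radius $\rho$.

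The genuine gap is the boundary slab $\sigma\in[1/2,\,1/2+\rho]$. You propose to handle it by a ``two-dimensional mean-value estimate for $|L|^2$ on a half-disk adjacent to the critical line,'' but the sub-mean-value inequality for $|L|^2$ requires a full disk inside the region of holomorphy, and the paper's hypothesis only gives analytic continuation to $\Re(s)>1/2$. Having the single grid value at $\sigma_1=1/2$ does not help you control $\sigma\in(1/2,1/2+\rho)$. To close this you need either (i) to assume holomorphy in a slightly wider strip $\Re(s)>1/2-\eta$ (which is true for the $L$-functions the paper has in mind, but is not stated), or (ii) to replace the disk mean-value by a Poisson-kernel / half-plane argument, which you have not supplied. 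As written, this step is not complete.
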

\begin{proof}
See \cite[Lemma 2]{Ramsan}.
\end{proof}

\noindent By Remark \ref{Remark 3.3}, we get the following asymptotic formula.

\begin{Lemma}\label{Lemma 3.4} Let  $\varepsilon>0$  be sufficiently small. Then

$$ \sum_{X \leq n\leq 2X\atop(n,q_{1})=1} f(q_{0}n)=w_{f,q_{0},q_{1}}X+O_{f,k,\varepsilon}\left((Xq_{0})^{1/2+\varepsilon} \right)$$
for some $w_{f,q_{0},q_{1}}$.
\end{Lemma}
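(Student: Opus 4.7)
The plan is to apply Perron's formula to the Dirichlet series $D[f](s,q_0;q_1)=\sum_{(n,q_1)=1}f(q_0n)n^{-s}$ and shift the contour past the pole of $\zeta(s)$ at $s=1$ to extract the main term. From Remark \ref{Remark 3.3}, this Dirichlet series factors as $\zeta(s)H(s)$, where $H(s):=L(g,\chi_0,s)R_{q_0,q_1}(s)$ is holomorphic on $\Re(s)>1/2$ (with $\chi_0$ the principal character modulo $q_1$), so the natural choice is to take $w_{f,q_0,q_1}$ to be the residue $H(1)=\operatorname{Res}_{s=1}D[f](s,q_0;q_1)$.

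I would first apply a truncated Perron formula on the line $\Re(s)=1+1/\log X$ with truncation height $T$, for both $Y=X$ and $Y=2X$. Using the divisor bound $f(q_0n)\ll q_0^{o(1)}n^{o(1)}$ and a Shiu-type estimate for the near-diagonal contribution, the Perron error is $\ll q_0^{o(1)}(Y/T)(\log Y)^{O(1)}$. Next I would shift the contour to $\Re(s)=1/2+\varepsilon$: by Lemma \ref{moving}, a height $T_0\in[T,2T]$ can be chosen at which $|L(g,\chi_0,\sigma+iT_0)|\ll(qT)^{\varepsilon^2}$, and combined with the convexity bound for $\zeta$ and the estimate \eqref{3Rq0q1} for $R_{q_0,q_1}$, this makes the horizontal segments negligible. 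The simple pole at $s=1$ contributes the intended main term $w_{f,q_0,q_1}\cdot Y$.

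The substantive step is to bound the shifted vertical integral, which is $\ll Y^{1/2+\varepsilon}\int_{-T}^{T}|D[f](1/2+\varepsilon+it,q_0;q_1)|/(1+|t|)\,dt$. I would write $|D[f]|=|\zeta|\cdot|H|$, decompose the $t$-range dyadically into blocks $|t|\asymp T'\leq T$, and apply Cauchy-Schwarz on each block to factor the $L^1$-norm of $|D[f]|$ into the product of the $L^2$-norms of $\zeta$ (controlled by the classical second-moment estimate) and of $H$ (controlled by the assumption \eqref{q1}). This gives $\int_{T'}^{2T'}|D[f]|\,dt\ll T'q_0^{1/2+o(1)}(\log T')^{1/2}$; dividing by $T'$ and summing dyadically yields $\int|D[f]|/(1+|t|)\,dt\ll q_0^{1/2+o(1)}(\log T)^{3/2}$, so the vertical integral is $\ll Y^{1/2+\varepsilon}q_0^{1/2+\varepsilon}$. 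Taking $T=X$ makes the Perron error negligible, and subtracting the bounds at $Y=2X$ and $Y=X$ produces the lemma with error $(Xq_0)^{1/2+\varepsilon}$.

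The main obstacle, I expect, is the sharp combination of the assumption \eqref{q1} (which is an $L^2$-bound on $D[f]/\zeta$ rather than on $D[f]$ itself) with an unconditional $L^2$-bound on $\zeta$, in a way that avoids any fourth-moment hypothesis on $L(g,\chi_0,s)$; the crucial trick is to split $|D[f]|=|\zeta|\cdot|H|$ \emph{before} invoking Cauchy-Schwarz, so that each factor is controlled by a second moment of the appropriate type.
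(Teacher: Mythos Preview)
Your proposal is correct and follows essentially the same route as the paper: Perron's formula, a contour shift to the critical line picking up the residue $w_{f,q_0,q_1}=\operatorname{Res}_{s=1}D[f](s,q_0;q_1)$, and then the key splitting $|D[f]|=|\zeta|\cdot|H|$ followed by Cauchy--Schwarz so that the vertical integral is controlled by the classical second moment of $\zeta$ together with the assumed bound \eqref{q1} on $\int|D/\zeta|^2$. The only cosmetic differences are that the paper works directly with the kernel $\frac{(2X)^s-X^s}{s}$ rather than treating $Y=X$ and $Y=2X$ separately, and shifts to $\Re(s)=1/2$ rather than $1/2+\varepsilon$.
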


\begin{proof}
First, we pick $X_{0} \in [X,2X]$ as in Lemma \ref{moving}. Let $$D(s)= \sum_{n=1 \atop (n,q_{1})=1}^{\infty} \frac{f(q_{0}n)}{n^{s}}.$$
By using Perron's formula, we get
$$\sum_{X\leq n\leq 2X   \atop(n,q_{1})=1} f(q_{0}n)  =\frac{1}{2\pi i} \int_{1+\varepsilon^{2}+iX_{0}}^{1+\varepsilon^{2}+iX_{0}} D(s) \frac{(2X)^{s}-X^{s}}{s} ds+O_{f,\varepsilon}(q_{0}^{1/2 + o(1)}X^{\varepsilon}).$$
By shifting the line of integration to the line  $\Re(s)=\frac{1}{2},$ we have
$$\sum_{X\leq n\leq 2X\atop(n,q_{1})=1}f(q_{0}n)=\omega_{f,q_{0},q_{1}}X+ \frac{1}{2\pi i}\int_{\frac{1}{2}-iX_{0}}^{\frac{1}{2}+iX_{0}} D(s)\frac{(2X)^{s}-X^{s}}{s} ds + O_{\varepsilon,f}\left(q_{0}^{1/2+o(1)} X^{\varepsilon}\right)$$
where $$w_{f,q_{0},q_{1}} = \textrm{Res}_{s=1} L(g,s)\zeta(s)  \prod_{p|q} |L_{p}(f,1)| \prod_{p^{l} \parallel q_{0}}(f(p^{l})+f(p^{l+1})p^{-1}+...).$$
By applying H\"{o}lder's inequality, \eqref{q1} and $$\int_{0}^{X_{0}}|\zeta(\frac{1}{2}+it)|^{2}dt \ll_{\varepsilon} X (\log X),$$
we see that 
\begin{equation}\begin{split}\int_{\frac{1}{2}-iX}^{\frac{1}{2}+iX} |\frac{ D(s)}{s}|X^{1/2}ds
&\ll_{f,\varepsilon} q_{0}^{1/2+o(1)} \left(X^{1/2}  \Big|\int_{\frac{1}{2}-iX}^{\frac{1}{2}+iX} \left|\frac{L(g,s)}{s}\right|^{1/2}ds\Big|^{1/2}  \Big|\int_{\frac{1}{2}-iX}^{\frac{1}{2}+iX} \left|\frac{\zeta(s)}{s}\right|^{1/2}ds\Big|^{1/2} + X^{\varepsilon}\right)\\ 
&\ll_{f,k,\varepsilon} ( Xq_{0})^{1/2+\varepsilon}\end{split} \nonumber \end{equation} 
Therefore, we have  
 \begin{equation}\label{smoothing} 
 \sum_{X\leq n\leq 2X \atop(n,q_{1})=1} f(q_{0}n)- w_{f,q_{0},q_{1}}X \ll_{f,k,\varepsilon} ( Xq_{0})^{1/2+\varepsilon}\end{equation} 
 \end{proof}
 
 Note that by using the divisor bound, when $q=q_{0}q_{1} \geq 2,$ it follows that 
 \begin{equation}\label{3wfq}\begin{split} |w_{f,q_{0},q_{1}}| &\leq \textrm{Res}_{s=1}L(g,s)\zeta(s)  (\log_{2}q)^{A_{2}}.
\end{split} \end{equation} for some $A_{2}.$
By applying the same argument as in the proof of Lemma \ref{Lemma 3.4}, we have the following lemma.

\begin{Lemma}\label{Lemma 3.2} Let  $\varepsilon>0$  be sufficiently small. Then for any non-principal character $\chi$ modulo $q_{1},$ 
$$ \sum_{X \leq n\leq 2X \atop(n,q_{1})=1} \chi(n)f(q_{0}n)^{2} \ll_{f,k,\varepsilon}(Xq_{0})^{1/2+\varepsilon}q_{1}^{\varepsilon^{2}}.$$
\end{Lemma}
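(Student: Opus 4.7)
The plan is to mimic the Perron-integral argument used in the proof of Lemma~\ref{Lemma 3.4}. Define
$$\tilde D(s, \chi, q_0, q_1) := \sum_{\substack{n=1 \\ (n, q_1) = 1}}^{\infty} \frac{\chi(n)\, f(q_0 n)^2}{n^s},$$
and pick a height $X_0 \in [X, 2X]$ via Lemma~\ref{moving}, applied uniformly to every L-function appearing in the factorisation of $\tilde D$, so the horizontal-segment contributions in the contour shifts below are negligible. Perron's formula then produces
$$\sum_{\substack{X \leq n \leq 2X \\ (n, q_1) = 1}} \chi(n)\, f(q_0 n)^2 = \frac{1}{2 \pi i} \int_{1 + \varepsilon^2 - i X_0}^{1 + \varepsilon^2 + i X_0} \tilde D(s, \chi, q_0, q_1) \frac{(2X)^s - X^s}{s}\, ds + O\bigl(q_0^{1/2 + o(1)} X^{\varepsilon}\bigr).$$

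Next, I would shift the vertical line of integration down to $\Re s = 1/2$. The decisive new feature compared with Lemma~\ref{Lemma 3.4} is that, because $\chi$ is non-principal modulo $q_1$, every Dirichlet L-factor twisted by $\chi$ appearing in the Euler factorisation of $\tilde D$ (in particular $L(\chi, s)$) is entire, so the shift picks up no residue at $s = 1$ and \emph{no main term arises}.

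To bound the resulting critical-line integral, I would produce an explicit factorisation of $\tilde D$, modelled on Remark~\ref{Remark 3.3}. Expanding $f(q_0 n)^2 = (\sum_{d \mid q_0 n} g(d))^2$, swapping summations, and applying Cauchy's identity for complete homogeneous symmetric polynomials at each prime coprime to $q_0 q_1$ yields a decomposition of $\tilde D$ as $L(\chi, s)\, L(g, \chi, s)\, W(s, \chi, q_0, q_1)$, where $W$ carries an additional copy of $L(g, \chi, s)$, a Rankin--Selberg-type piece $L(g \otimes g, \chi, s)$, and an absolutely convergent Euler product handling the primes dividing $q_0 q_1$; the Ramanujan-bound argument of Remark~\ref{Remark 3.3} controls the latter by $q_0^{1/2 + o(1)} (\log q)^{O(1)}$. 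Combining this factorisation with Cauchy--Schwarz on the critical line and invoking the square-moment hypothesis~\eqref{q2} for $L(g, \chi, s)$ (which supplies the $q_1^{\varepsilon^2}$-saving) together with the classical second moment of $L(\chi, s)$ delivers the target bound $(X q_0)^{1/2 + \varepsilon} q_1^{\varepsilon^2}$.

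The principal technical obstacle is handling the implicit Rankin--Selberg factor $L(g \otimes g, \chi, s)$, on which the paper imposes no direct hypothesis. I expect to dispose of it via the Cauchy-identity consequence
$$L(g, \chi, s)\, L(g \otimes g, \chi, s) = \sum_{n=1}^{\infty} \frac{g(n)\, f(n)\, \chi(n)}{n^s},$$
which absorbs the troublesome factor into a single Dirichlet series with multiplicative, divisor-bounded coefficients; a minor variant of~\eqref{q2}, or a Parseval bound combined with the Ramanujan conjecture on $g$, then controls its size on $\Re s = 1/2$ and closes the estimate.
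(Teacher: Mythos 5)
The decisive problem is your treatment of the Rankin--Selberg factor, and it is a genuine gap rather than a technicality. Once you take the exponent $2$ literally, the series $\tilde D$ contains $L(g\otimes g,\chi,s)$ (equivalently, after your regrouping, the degree-$k(k+1)$ series $\sum_n g(n)f(n)\chi(n)n^{-s}$, and even that identity only holds up to a correction Euler product), and nothing in the paper's hypotheses controls this object: the class $\mathcal{F}_k$ assumes analytic continuation, non-vanishing and the mean-square bound \eqref{squareup} only for the degree-$k$ functions $L(g,\chi,s)$. To shift to $\Re s=1/2$ and win you would need (i) pole-freeness of $L(g\otimes g,\chi,s)$ in $\Re s>1/2$, which is not assumed, and (ii) a Lindel\"of-on-average bound for it up to height $\asymp X$. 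There is no ``minor variant of \eqref{q2}'' covering it, and ``Parseval plus Ramanujan'' does not apply: on the critical line you are evaluating a full $L$-function, not a Dirichlet polynomial of length $X$, and converting it into one requires an approximate functional equation for $L(g\otimes g,\chi,s)$ -- again unavailable for $\mathcal{F}_k$, and with conductor far too large even if it were. Moreover, your grouping still leaves a product of several factors under the integral, so the Cauchy--Schwarz step would also demand, e.g., the fourth moment of $L(g,\chi,s)$ or the second moment of $L(s,\chi)L(g,\chi,s)$, neither of which is assumed. So the third paragraph of your plan cannot be completed from the stated hypotheses.

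The underlying issue is that the square in the statement is almost certainly a misprint. The lemma is invoked only through \eqref{additive} in Remark \ref{Remark 3.9} and the proof of Proposition \ref{Prop1}, where what is needed is the non-principal analogue of Lemma \ref{Lemma 3.4}, i.e.\ a bound for the \emph{linear} sum $\sum_{(n,q_1)=1}\chi(n)f(q_0n)$; and the paper's entire proof is ``apply the same argument as in Lemma \ref{Lemma 3.4}'', which only makes sense for that linear sum: its Dirichlet series factors as $L(g,\chi,s)L(s,\chi)$ times a factor controlled as in Remark \ref{Remark 3.3}, there is no residue because $\chi$ is non-principal, and Cauchy--Schwarz with \eqref{q2} and the classical second moment of $L(s,\chi)$ gives $(Xq_0)^{1/2+\varepsilon}q_1^{\varepsilon^2}$. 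For that intended statement, the first two paragraphs of your proposal are exactly the paper's argument and are fine; the Rankin--Selberg machinery is unnecessary. For the squared statement as literally printed, your method (and the paper's one-line proof) does not go through with the given assumptions.
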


\noindent To attach $e(\beta n)$ to $f(n)e(an/q),$ we apply the following lemma.  
\begin{Lemma}\label{Lemma 3.8} $($\cite{MRT1}, \rm{Lemma 2.1}$)$
Let $F:[X,2X] \rightarrow \mathbb{C}$ be a smooth function. Then for any function $G:\mathbb{N} \rightarrow \mathbb{C}$ and absolutely integrable function $\mathbf{H}:[X,2X] \rightarrow \mathbb{C}$, 

$$\sum_{X\leq n \leq 2X} F(n)G(n) - \int_{X}^{2X} F(x)\mathbf{H}(x) dx \leq |F(2X)E(2X)|+\int_{X}^{2X} |F'(x)E(x)|dx$$ 
where $$E(x):=\max_{X\leq X' \leq x}\big|\sum_{X\leq n \leq X'}G(n)-\int_{X}^{X'} \mathbf{H}(x)dx\big|.$$
\end{Lemma}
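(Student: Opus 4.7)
The plan is to derive the claim from Abel summation on the discrete side and integration by parts on the continuous side, then compare the two identities term by term. The estimate is, in essence, a quantitative version of the statement that partial summation commutes with replacing $G$ by its ``expected density'' $\mathbf{H}$, with the error controlled by the worst partial-sum discrepancy $E$.

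First I would introduce the two cumulative quantities
\begin{equation*}
S(x) := \sum_{X \leq n \leq x} G(n), \qquad T(x) := \int_{X}^{x} \mathbf{H}(y)\, dy,
\end{equation*}
so that by definition $|S(x) - T(x)| \leq E(x)$ for every $x \in [X, 2X]$, with $S(X^{-}) = T(X) = 0$. Applying Abel summation on the discrete side and integration by parts on the continuous side (both being valid since $F$ is smooth on $[X,2X]$ and $\mathbf{H}$ is absolutely integrable) yields
\begin{equation*}
\sum_{X \leq n \leq 2X} F(n) G(n) = F(2X) S(2X) - \int_{X}^{2X} F'(x) S(x)\, dx,
\end{equation*}
\begin{equation*}
\int_{X}^{2X} F(x) \mathbf{H}(x)\, dx = F(2X) T(2X) - \int_{X}^{2X} F'(x) T(x)\, dx.
\end{equation*}

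Subtracting these two identities gives the clean difference
\begin{equation*}
\sum_{X \leq n \leq 2X} F(n) G(n) - \int_{X}^{2X} F(x) \mathbf{H}(x)\, dx = F(2X)\bigl(S(2X) - T(2X)\bigr) - \int_{X}^{2X} F'(x) \bigl(S(x) - T(x)\bigr)\, dx.
\end{equation*}
Taking absolute values and inserting $|S(x) - T(x)| \leq E(x)$ gives exactly the stated bound $|F(2X) E(2X)| + \int_{X}^{2X} |F'(x) E(x)|\, dx$.

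Since the result is essentially textbook partial summation, I do not anticipate a substantive obstacle; the only places requiring mild care are (i) making sure the lower endpoint contributes no boundary term, which holds because $S$ and $T$ both vanish at $X$, and (ii) justifying integration by parts on $T$, which is fine because $T$ is absolutely continuous with derivative $\mathbf{H}$ almost everywhere and $F$ is smooth. No appeal to the structure of $f$, $g$, or the circle method decomposition is required, so the lemma will be available as a black box wherever $F(n) = e(\beta n)$ is compared with its continuous analogue in the major arc analysis that follows.
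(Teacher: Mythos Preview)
Your argument is correct and is precisely what the paper intends by ``This follows easily from summation by parts'': you have simply written out the Abel summation / integration-by-parts computation that the paper leaves implicit. There is nothing to add.
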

\begin{proof}
This follows easily from summation by parts.
\end{proof}
\begin{Rem}\label{Remark 3.9}
In our case, we take $$F(n)=e(n\beta),\; G(n)=f(n)e(\frac{a}{q}n),\; \mathbf{H}(x)=\sum_{q=q_{0}q_{1}}\frac{\mu(q_{1})}{\phi(q_{1})q_{0}} w_{f,q_{0},q_{1}}.$$ Notice that $\mathbf{H}(x)$ is a constant in $x.$ By using \eqref{additive},  we have  
  \begin{equation}\begin{split} E(x)&=\max_{X \leq X' \leq x}\left|\sum_{q=q_{0}q_{1}}\frac{\mu(q_{1})}{\phi(q_{1})q_{0}} w_{f,q_{0},q_{1}}X'+O_{f,k,\varepsilon}\left(q^{1/2+\varepsilon+o(1)}X^{1/2+\varepsilon}\right)-X'\mathbf{H}(x)\right| \\&= O_{f,k,\varepsilon}\left(q^{1/2+\varepsilon+o(1)}X^{1/2+\varepsilon}\right). \end{split} \end{equation}  for 
$x \in [X,2X].$
\end{Rem}
\noindent Now we are ready to prove Proposition \ref{Prop1}.
\begin{Proof-non4} 
Let $\alpha=\frac{a}{q}+\beta$ for some $ q \leq Q, (a,q)=1.$ 
By Lemma \ref{Lemma 3.4},  Lemma \ref{Lemma 3.2} and Remark \ref{Remark 3.9}, we see that
\begin{equation}\begin{split}S_{f}(\frac{a}{q}+\beta)&=\sum_{X\leq n\leq 2X}f(n)e(\frac{a}{q} n)e(\beta n)\\&=\int_{X}^{2X}\sum_{q=q_{0}q_{1}}\frac{\mu(q_{1})}{\phi(q_{1})q_{0}} w_{f,q_{0},q_{1}}e(\beta x) dx +O_{f,k,\varepsilon}\left((|\beta|+\frac{1}{X})X^{3/2+\varepsilon}q^{1/2+\varepsilon+o(1)}\right). \end{split}\nonumber \end{equation} 
Let $$D_{q}:= \sum_{q=q_{0}q_{1}} \frac{\mu(q_{1})}{\phi(q_{1})q_{0}}w_{f,q_{0},q_{1}}.$$ 
By \eqref{3wfq}, we have
\begin{equation}\label{3dq} \begin{split} 
|D_{q}| &\ll_{f} \sum_{q=q_{0}q_{1}} \frac{1}{\phi(q_{1})q_{0}}|w_{f,q_{0},q_{1}}| \\&\ll_{f} \sum_{q=q_{0}q_{1}} \frac{1}{q} \textrm{Res}_{s=1}L(f,s)(\log_{2} q)^{A_{2}} \\&\ll_{f} \frac{(\log q)^{A_{2}}}{q} \sum_{q=q_{0}q_{1}}1 \\&\ll_{f} \frac{d_{2}(q) (\log q)^{A_{2}}}{q}.
\end{split} \end{equation}
By the Fourier inversion formula, we have
\begin{equation}\label{3FIF} \begin{split}\int_{\mathbb{R}}\big|\int_{X}^{2X} e(\beta x) dx\big|^{2} e(\beta h) d\beta &= \int_{\mathbb{R}} 1_{[X,2X]}(x)1_{[X,2X]}(x+h) dx\\&=X+O(h)\\&=X+O(H).\nonumber\end{split}\end{equation} 
By applying the divisor bound, we have
 $$\int _{|\beta|\leq X^{-\frac{5}{6}-2\varepsilon }}|S_{f}(\frac{a}{q}+\beta)|^{2}e\big(h(\frac{a}{q}+\beta)\big)d\beta=e(h\frac{a}{q})X |D_{q}|^{2}\big(1+O(H/X)\big) $$ 
\begin{equation}\begin{split}
& + O_{f,k,\varepsilon}\bigg(\int_{|\beta|\leq X^{-\frac{5}{6}-2\varepsilon}}\big(|D_{q}|(|\beta|X+1) X^{3/2+\varepsilon}q^{1/2+\varepsilon+o(1)}+  (|\beta|^{2}+\frac{1}{X^{2}})X^{3+2\varepsilon }q^{1+2\varepsilon+o(1)} \big)d\beta \\&+ \int_{|\beta|>X^{-\frac{5}{6}-2\varepsilon}}     \frac{|D_{q}|^{2}}{ |\beta|^{2}}d\beta\bigg) 
    \\&= e(h\frac{a}{q}) |D_{q}|^{2}X +  O_{f,k,\varepsilon}\big(|D_{q}|^{2}H +|D_{q}|\left(X^{5/6-3\varepsilon}q^{1/2+\varepsilon+o(1)}+X^{2/3-\varepsilon}q^{1/2+\varepsilon^{2}+o(1)}\right)
    \\&\thinspace \thinspace \thinspace + X^{1/2-4\varepsilon}q^{1+2\varepsilon+o(1)} +  X^{1/6-\varepsilon} q^{1+2\varepsilon+o(1)} + |D_{q}|^{2}X^{5/6+2\varepsilon}) \nonumber \end{split}\end{equation} 
    Note that $Q = X^{1/12-10\varepsilon}.$
Therefore, we see that
\begin{equation}
\begin{split}
\sum_{1 \leq q \leq Q} \sum_{1 \leq a \leq q \atop(a,q)=1}& \int _{|\beta|\leq X^{-\frac{10}{11}-2\varepsilon} }|S_{f}(\frac{a}{q}+\beta)|^{2}e\big(h(\frac{a}{q}+\beta)\big)d\beta 
\\&=X\sum_{1 \leq q\leq Q} c_{q}(h)|D_{q}|^{2}+O_{f,k,\varepsilon}\left( HX^{o(1)} + X^{5/6-3\varepsilon} +  + X^{5/6+2\varepsilon}\right)\\
&=X\big(B_{h} - \sum_{q> Q} c_{q}(h)|D_{q}|^{2}\big)+O_{f,k,\varepsilon}\left(HX^{o(1)}+X^{5/6+2\varepsilon}\right)
\end{split}\nonumber
\end{equation}
where $c_{q}(h)$ is the Ramanujan sum. It is known that $$|c_{q}(h)|= \left|\mu\left(\frac{q}{(q, h)}\right) \frac{\phi(q)}{\phi\left(\frac{q}{(q, h)}\right)}\right| \leq (q,h).$$  Since $|D_{q}|^{2}\ll_{f} \frac{1}{q^{2-o(1)}},$ we have

\begin{equation} \begin{split} \sum_{q> Q} c_{q}(h)|D_{q}|^{2} &\ll_{f} \sum_{d|h} \sum_{q>Q \atop (h,q)=d} \frac{d}{q^{2-o(1)}}\\ &\ll_{f} \sum_{d|h} \sum_{q>\frac{Q}{d}} \frac{d}{(dq)^{2-o(1)}}\\ &\ll_{f} d_{2}(h)Q^{-1+o(1)}    .\end{split}\end{equation}
 Therefore, we conclude that 
\begin{equation}\label{majorerror}\int_{\mathcal{M}} |S_{f}(\beta)|^{2}e(h\beta)d\beta -B_{h}X=O_{f,k,\varepsilon}\left(HX^{o(1)}+X^{5/6+2\varepsilon} + X^{1+o(1)}Q^{-1+o(1)}\right).\nonumber \end{equation}
 \qed
\end{Proof-non4}

\section{Minor arc estimates }
\noindent In subsection 4.1, we will apply Lemma \ref{Lemma13} to handle $g(n)$ in $S_{f}(\alpha)$ for $n > HQ^{-1/2}X^{-\varepsilon^{2}}$. In subsection 4.2, we will essentially follow the methods applied in \cite{MRT1} (see Lemma \ref{Lemma 4.5}). In subsections 4.3 and 4.4, we will complete the proof of the minor arc estimates in Proposition \ref{Propm}.

\subsection{A modification of $f(n)$}

\noindent To apply the expression $f = g \ast 1$ and combinatorial decomposition, it is necessary to separate $S_{f}(\alpha)$.
\begin{Lemma}\label{Lemma 4.1} Let $1\leq Y \leq 2X.$ Then 
\begin{equation}\label{minormain}\begin{split}\int_{m \cap [\theta -\frac{1}{2H}, \theta+\frac{1}{2H}]}
 &|S_{f}(\alpha)|^{2} d\alpha \ll\int_{m\cap [\theta -\frac{1}{2H}, \theta+\frac{1}{2H}]} \bigg(  \Big|\sum_{Y\leq d \leq 2X} g(d) \sum_{\frac{X}{d} \leq k \leq \frac{2X}{d}}e(dk \alpha)\Big|^{2} \\&
+\Big|\sum_{1 \leq d \leq Y} g(d) \sum_{\frac{X}{d} \leq k \leq \frac{2X}{d}}e(dk \alpha)\Big|^{2}\bigg) d\alpha.
\end{split} \nonumber
\end{equation}
\end{Lemma}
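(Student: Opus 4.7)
The plan is to expand $S_f(\alpha)$ via the Dirichlet convolution structure $f = g \ast 1$ and then split the resulting $d$-sum at the parameter $Y$, applying an elementary inequality before integrating. This is essentially a bookkeeping lemma whose purpose is to set up the two regimes that will be analyzed separately in the subsequent subsections.

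Concretely, I would first use $f(n) = \sum_{d \mid n} g(d)$ to write
\begin{equation*}
S_f(\alpha) = \sum_{X \leq n \leq 2X} \sum_{d \mid n} g(d)\, e(n\alpha).
\end{equation*}
Swapping the order of summation and writing $n = dk$, this becomes
\begin{equation*}
S_f(\alpha) = \sum_{1 \leq d \leq 2X} g(d) \sum_{X/d \leq k \leq 2X/d} e(dk\alpha).
\end{equation*}
Next, I would split the outer sum at the threshold $Y$, setting
\begin{equation*}
T_1(\alpha) := \sum_{1 \leq d \leq Y} g(d) \sum_{X/d \leq k \leq 2X/d} e(dk\alpha), \qquad T_2(\alpha) := \sum_{Y \leq d \leq 2X} g(d) \sum_{X/d \leq k \leq 2X/d} e(dk\alpha),
\end{equation*}
so that $S_f(\alpha) = T_1(\alpha) + T_2(\alpha)$ (with a harmless double count of the single term $d = Y$ absorbed into the $\ll$).

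To finish, I would apply the elementary inequality $|a+b|^2 \leq 2(|a|^2 + |b|^2)$ pointwise in $\alpha$ to obtain $|S_f(\alpha)|^2 \ll |T_1(\alpha)|^2 + |T_2(\alpha)|^2$, and then integrate over $m \cap [\theta - \tfrac{1}{2H}, \theta + \tfrac{1}{2H}]$, which yields precisely the claimed bound. There is no genuine obstacle: the content of the lemma is purely combinatorial, and the real work is deferred to later subsections, where $T_2$ will be handled by Lemma \ref{Lemma13} (using the exponential sum cancellation for $g$) and $T_1$ will be treated by combinatorial-decomposition and large-value arguments in the spirit of \cite{MRT1}.
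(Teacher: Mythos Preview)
Your proposal is correct and matches the paper's proof essentially verbatim: the paper also expands $S_f(\alpha)$ via $f=g\ast 1$, swaps the order of summation to get $\sum_{1\leq d\leq 2X} g(d)\sum_{X/d\leq k\leq 2X/d} e(dk\alpha)$, splits at $Y$, and then invokes the Cauchy--Schwarz inequality (your $|a+b|^2\leq 2(|a|^2+|b|^2)$) before integrating.
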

\begin{proof}

By using the fact that $f=g \ast 1,$ we have $$ f(n) = \sum_{d|n} g(d).$$
Therefore, we see that
\begin{equation} 
\begin{split}S_{f}(\alpha) = \sum_{1\leq d \leq 2X} g(d) \sum_{\frac{X}{d} \leq k \leq \frac{2X}{d}} e(dk\alpha). 
\end{split}\nonumber \end{equation} By separating the sum over $d,$ we have
\begin{equation} \begin{split}S_{f}(\alpha)= \sum_{1 \leq d \leq Y} g(d)\sum_{\frac{X}{d} \leq k \leq \frac{2X}{d}}e(dk \alpha) \thinspace +
\sum_{Y\leq d \leq 2X} g(d)\sum_{\frac{X}{d} \leq k \leq \frac{2X}{d}}e(dk \alpha).\end{split} \nonumber\end{equation} The proof is completed by applying the Cauchy-Schwarz inequality.
\end{proof}
\noindent First, we bound the first integral on the right-hand side of \eqref{minormain} by applying Lemma \ref{Lemma13}. Note that by Dirichlet's theorem, if $\alpha \in m \cap [\theta -\frac{1}{2H}, \theta+\frac{1}{2H}],$
then $X^{-5/6-2\varepsilon}<|\alpha-a/q|<1/(qQ)$ for some $a$ and $q$ such that $(a,q)=1$ and $1\leq q \leq Q$.
\begin{Lemma}\label{Lemma 4.2} Let $1\leq Y \leq 2X,$ and let  $X^{-5/6-2\varepsilon}<|\alpha-a/q|<1/qQ$ for some $a$ and $q$ such that $(a,q)=1$ and $1\leq q \leq Q$. Then
\begin{equation}\label{4error1}\begin{split} & \int_{m\cap [\theta -\frac{1}{2H},  \theta+\frac{1}{2H}]}\Big|\sum_{Y \leq d \leq 2X} g(d) \sum_{\frac{X}{d} \leq k \leq \frac{2X}{d}}e(dk \alpha)\Big|^{2} d\alpha 
\\& \ll_{f,k,\varepsilon} q^{1+o(1)}X^{3+2\varepsilon^{2}}Y^{-1}|\alpha-a/q|^{1+2\varepsilon^{2}}H^{-1} + X^{2+20\varepsilon}Y^{-1/3+10\varepsilon}H^{-1}. \end{split}\end{equation}
\end{Lemma}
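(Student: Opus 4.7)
The plan is to parametrize the double sum by $k$ rather than $d$: writing $n=dk$ and swapping the order of summation gives
\[
T(\alpha) := \sum_{Y \leq d \leq 2X} g(d) \sum_{X/d \leq k \leq 2X/d} e(dk\alpha) = \sum_{1 \leq k \leq 2X/Y} \sum_{\max(Y,\, X/k) \leq d \leq 2X/k} g(d)\, e\bigl(d \cdot k\alpha\bigr).
\]
For each fixed $k$, the inner sum is a truncated additively twisted sum of $g$ with twist $k\alpha$. Setting $\beta := \alpha - a/q$ and writing $k\alpha \equiv r_k/q_k + k\beta \pmod 1$, where $r_k/q_k$ is the reduced form of $(ka \bmod q)/q$ (so $q_k = q/(q,k) \leq q$ and $(r_k,q_k)=1$), the condition $|k\beta| \geq |\beta| > X^{-5/6-2\varepsilon}$ ensures the hypothesis of Lemma \ref{Lemma13} is met. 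Applying that lemma to each inner sum, after writing it as the difference of two sums starting from $1$ and ending at endpoints of size at most $2X/k$, yields
\[
\Bigl|\sum_{\max(Y,\, X/k) \leq d \leq 2X/k} g(d)\, e(dk\alpha)\Bigr| \ll_{f,k,\varepsilon} q^{1/2+\varepsilon^{2}} (X/k)^{1+\varepsilon^{2}} (k|\beta|)^{1/2+\varepsilon^{2}} + (X/k)^{5/6+6\varepsilon}.
\]

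Next, summing over $1 \leq k \leq 2X/Y$ with the standard estimates $\sum_{k \leq N} k^{-1/2} \ll N^{1/2}$ and $\sum_{k \leq N} k^{-5/6-6\varepsilon} \ll N^{1/6-6\varepsilon}$ (valid for small $\varepsilon$) turns this into the pointwise bound
\[
|T(\alpha)| \ll_{f,k,\varepsilon} q^{1/2+\varepsilon^{2}} X^{3/2+\varepsilon^{2}} Y^{-1/2} |\beta|^{1/2+\varepsilon^{2}} + X^{1+o(1)} Y^{-1/6+6\varepsilon}.
\]
Squaring (using $(A+B)^2 \leq 2(A^2+B^2)$ to avoid cross terms) and integrating over $m \cap [\theta - 1/(2H),\, \theta + 1/(2H)]$, an interval of measure at most $1/H$, produces both terms of the claimed bound after absorbing the small exponents into the $\varepsilon$'s.

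The main technical care required is organising the application of Lemma \ref{Lemma13} for each $k$: passing from the rational $ka/q$ to its reduced form $r_k/q_k$ to meet the coprimality hypothesis and verifying that the crude bound $q_k^{1/2+\varepsilon^{2}} \leq q^{1/2+\varepsilon^{2}}$ is acceptable everywhere; handling the restricted range $[\max(Y, X/k),\, 2X/k]$ by telescoping into two sums beginning at $1$; and confirming that $|k\beta| > X^{-5/6-2\varepsilon}$ is inherited from $|\beta| > X^{-5/6-2\varepsilon}$, so that the hypothesis of Lemma \ref{Lemma13} is preserved for every $k$. Once these bookkeeping items are settled, the remaining computation is purely a matter of summing the elementary series in $k$ and tracking the exponents.
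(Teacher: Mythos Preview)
Your proposal is correct and follows essentially the same route as the paper: swap the order of summation so that $k$ is outer, apply Lemma~\ref{Lemma13} to each inner sum in $d$, sum the resulting bound over $k \leq 2X/Y$, then square and integrate over the interval of length $1/H$. The paper phrases the passage to sums starting at $1$ as ``dyadic splitting'' rather than telescoping, but this is cosmetic.

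One small clarification on a point you flag as needing care: the hypothesis of Lemma~\ref{Lemma13} is $\alpha \geq X^{-5/6-2\varepsilon}$ \emph{relative to the length of the sum}, which here is $\asymp X/k$. So the relevant condition is $k|\beta| \gg (X/k)^{-5/6-2\varepsilon}$, not merely $k|\beta| > X^{-5/6-2\varepsilon}$. Your inequality $k|\beta| \geq |\beta| > X^{-5/6-2\varepsilon}$ does imply this, since $k \cdot X^{-5/6-2\varepsilon} \geq k^{5/6+2\varepsilon} X^{-5/6-2\varepsilon}$ for $k \geq 1$ and small $\varepsilon$, but the justification as written skips this step. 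The paper records exactly this comparison as $(X/k)^{-5/6-2\varepsilon} \ll k|\alpha - a/q|$.
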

\begin{proof}

By rearranging the sum, we see that
$$\sum_{Y \leq d \leq 2X} g(d) \sum_{\frac{X}{d} \leq k \leq \frac{2X}{d}}e(dk \alpha) =  \sum_{1\leq k\leq \frac{2X}{Y}} \sum_{\max(\frac{X}{k},Y) \leq d \leq \frac{2X}{k}} g(d) e(dk\alpha).$$
Since $(X/k)^{-5/6-2\varepsilon} \ll k|\alpha-a/q|,$ by the dyadic splitting with applying Lemma \ref{Lemma13}, we see that 
\begin{equation}\label{4pointwise}
\begin{split}
   \sum_{1\leq k\leq \frac{2X}{Y}} &\sum_{{\max(\frac{X}{k},Y)} \leq d \leq \frac{2X}{k}} g(d)e(dk\alpha) \\&\ll_{f,k,\varepsilon} \sum_{1 \leq k \leq \frac{2X}{Y}} \left( q^{1/2+o(1)}\left(2X/k\right)^{1+\varepsilon^{2}} (k |\alpha-a/q|)^{1/2+\varepsilon^{2}} + \left(\frac{2X}{k}\right)^{5/6+5\varepsilon}\right) 
   \\& \ll_{f,k,\varepsilon} q^{1/2+o(1)}X^{3/2 + \varepsilon^{2
   }} |\alpha-a/q|^{1/2+\varepsilon^{2}} Y^{-1/2}+ X^{1+10\varepsilon} Y^{-1/6+5\varepsilon}. 
\end{split} 
\end{equation}

Using \eqref{4pointwise} as a pointwise bound, we see that
\begin{equation}\begin{split} &\int_{m\cap [\theta -\frac{1}{2H}, \theta+\frac{1}{2H}]} \Big|\sum_{Y \leq d \leq 2X} g(d) \sum_{\frac{X}{d} \leq k \leq \frac{2X}{d}}e(dk \alpha)\Big|^{2} d\alpha \\&\ll_{f,k,\varepsilon} q^{1+o(1)}\left(X^{3+2\varepsilon^{2}}Y^{-1} |\alpha-a/q|^{1+2\varepsilon^{2}}\right)  H^{-1} + X^{2+20\varepsilon}Y^{-1/3 +10\varepsilon}  H^{-1}. 
\end{split}\end{equation}
\end{proof}
\noindent Since $|\alpha-a/q|\leq 1/(qQ), $ by taking $Y=HQ^{-1/2}X^{-\varepsilon^{2}},$ the right-hand side of the above inequality is bounded by \begin{equation}\label{firstH}\begin{split} q^{1+o(1)} & X^{3+2\varepsilon^{2}+\varepsilon^{2}} |\alpha-a/q|^{1+2\varepsilon^{2}} H^{-2}  +X^{2+20\varepsilon}H^{-4/3 +10\varepsilon} Q^{1/6+5\varepsilon}  
\\&\ll_{f,k,\varepsilon} X^{3+3\varepsilon^{2}}H^{-2} Q^{-1/2-2\varepsilon^{2}} +X^{2+20\varepsilon}H^{-4/3 +10\varepsilon}Q^{1/6+\varepsilon}.\end{split} \end{equation} 
\begin{Rem}
Since we assume that $X^{23/24+10\varepsilon} \ll H \ll X^{1-\varepsilon},$ the first term is larger than the second term.
\end{Rem}
\subsection{Combinatorial decomposition}
In this subsection, we prove that
\begin{equation} \begin{split}
\int_{m \cap [\theta -\frac{1}{2H}, \theta+\frac{1}{2H}]}  \Big|\sum_{1 \leq d \leq HQ^{-1/2}X^{-\varepsilon^{2}}} g(d)\sum_{\frac{X}{d} \leq k \leq \frac{2X}{d}}e(dk \alpha)\Big|^{2} d\theta \ll_{f,k,\varepsilon}  X^{1-\varepsilon^{2}+o(1)}. 
\end{split}\nonumber
\end{equation}
Let $$b(n):= \sum_{m|n \atop 1\leq m < HQ^{-1/2}X^{-\varepsilon^{2}} }g(m),$$
 $$S_{b}(\alpha) := \sum_{X \leq n \leq 2X} b(n)e(n\alpha).$$ 
We use the following lemma to $S_{b}(\alpha).$ 
\begin{Lemma}\label{Lemma 4.3}
Let $(a,q)=1,$ and let $\gamma, \rho'$ be real numbers such that $|\gamma| \ll \rho' \ll 1.$ Let 
\begin{equation}
I_{\gamma,\rho'}:=  \{ t \in \mathbb{R}: \rho'|\gamma|X \leq |t| \leq \frac{|\gamma|X}{\rho'} \}.
\end{equation}
Then 
$$\int_{\gamma-\frac{1}{H}}^{\gamma+\frac{1}{H}}\big|S_{b}(\frac{a}{q}+\theta)\big|^{2} d \theta$$
 \begin{equation}\label{longintegral}\ll \frac{d_{2}(q)^{4}}{q|\gamma|^{2}H^{2}} \sup_{q=q_{0}q_{1}} \int_{ I_{\gamma,\rho'}} \Big(\sum_{\chi(\mathrm{mod} \thinspace q_{1})}\int_{t-|\gamma|H}^{t+|\gamma|H} \big|\sum_{\frac{X}{q_{0}} \leq n \leq \frac{2X}{q_{0}}} \frac{b(q_{0}n)\chi(n)}{n^{\frac{1}{2}+it'}}\big| dt'\Big)^{2}dt\end{equation} $$ +(\rho'+\frac{1}{|\gamma| H})^{2} \int_{\mathbb{R}}\big(H^{-1} \sum_{x \leq n \leq x+H} |b(n)|\big)^{2} dx.$$

\end{Lemma}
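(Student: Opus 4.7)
The plan is to combine a Dirichlet-character expansion of $e(am/q)$ with Mellin inversion and a stationary phase analysis of the resulting oscillatory kernel, and then to integrate over $\theta \in [\gamma - 1/H, \gamma + 1/H]$.

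First, I would apply the standard identity
$$e(am/q) = \sum_{\substack{q = q_0 q_1 \\ q_0 \mid m}} \frac{1}{\phi(q_1)} \sum_{\chi \pmod{q_1}} \tau(\bar{\chi}) \chi(a) \chi(m/q_0)$$
(valid when $(m/q_0, q_1) = 1$) to write
$$S_b\!\left(\tfrac{a}{q} + \theta\right) = \sum_{q = q_0 q_1} \frac{1}{\phi(q_1)} \sum_{\chi \pmod{q_1}} \tau(\bar\chi)\chi(a)\, T_{q_0, q_1, \chi}(\theta),$$
with $T_{q_0, q_1, \chi}(\theta) := \sum_{\substack{X/q_0 \leq n \leq 2X/q_0 \\ (n,q_1)=1}} b(q_0 n)\chi(n)\, e(q_0 n \theta)$. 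Two applications of Cauchy-Schwarz---one on $\sum_\chi$ using $|\tau(\bar\chi)| \leq q_1^{1/2}$ and $\sum_\chi 1 = \phi(q_1) \leq q_1$, and one on the divisor sum $\sum_{q = q_0 q_1}$ that contributes a factor $d_2(q)$ at each squaring---will collectively account for the $d_2(q)^4/q$ factor appearing in \eqref{longintegral}.

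Second, after smoothing $1_{[X/q_0, 2X/q_0]}(n)$ by a bump $\psi$, I would invoke Mellin inversion to write
$$T_{q_0, q_1, \chi}(\theta) = \frac{1}{2\pi i} \int_{(\sigma)} D[b](s, \chi, q_0)\, M_{q_0, \theta}(s)\, ds,$$
with $D[b](s, \chi, q_0) := \sum_n b(q_0 n)\chi(n)\, n^{-s}$ and $M_{q_0, \theta}(s) := \int_0^\infty \psi\!\bigl(\tfrac{x q_0}{X}\bigr) e(q_0 x \theta)\, x^{s-1}\, dx$. Shifting the contour to $\Re s = 1/2$, a standard stationary phase analysis in the style of \cite[Lemma 8.1]{1BKY} shows that $M_{q_0, \theta}(1/2 + it)$ is concentrated where $t \asymp X\theta$, with its main support (for $\theta$ close to $\gamma$) contained in $I_{\gamma, \rho'}$ and rapid decay outside a slight enlargement.

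Third, I would split $T_{q_0, q_1, \chi}(\theta) = T^{\mathrm{in}} + T^{\mathrm{out}}$ according to whether $t \in I_{\gamma, \rho'}$ or not. For the inner contribution, integrating $|T^{\mathrm{in}}|^2$ over $\theta \in [\gamma - 1/H, \gamma + 1/H]$ and invoking a Parseval-type relation transfers the $\theta$-average of length $1/H$ into a $t'$-average of length $|\gamma|H$ around each $t$ in $I_{\gamma, \rho'}$; combined with the $\asymp 1/|\gamma|$ size of $M_{q_0, \theta}$ on its support (which after squaring and normalization furnishes the $1/(|\gamma|^2 H^2)$ factor), this produces the first term of \eqref{longintegral}. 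For the tail $T^{\mathrm{out}}$, the kernel obeys an average bound of the shape $(\rho' + 1/(|\gamma|H))$ times its natural size---with $1/(|\gamma|H)$ accounting for the effective broadening of support when $\theta$ varies by $1/H$---and bounding $T^{\mathrm{out}}$ by the trivial pointwise estimate $\sum_n |b(q_0 n)|$ restricted to a suitable window, then applying Cauchy-Schwarz to convert the sum into the local average $H^{-1} \sum_{x \leq n \leq x+H} |b(n)|$ integrated over $x$, yields the second error term.

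The technical heart of the argument is the stationary phase decomposition of $M_{q_0, \theta}(s)$ and the Parseval-type transfer between the $\theta$-interval of length $1/H$ and the dual $t'$-averaging of length $|\gamma|H$; this step must be carried out uniformly in $q_0$ to justify the supremum over the factorization $q = q_0 q_1$ in the stated bound. Tracking precisely how the repeated Cauchy-Schwarz applications produce $d_2(q)^4$ (and no more), and verifying that the tail cleanly yields a local $L^1$-average of $|b|$ rather than a pointwise bound---which would be wasteful since $b$ is not small pointwise---will be the most delicate bookkeeping.
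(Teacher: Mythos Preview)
The paper does not supply its own proof of this lemma; it simply cites \cite[Corollary~5.3]{MRT1} and remarks that $b(n)$ may be taken to be any function. Your outline is essentially the argument one finds upon following that citation: the additive-to-multiplicative conversion via the decomposition of $e(am/q)$ into Dirichlet characters (this is exactly equation~\eqref{additive} in the paper), Mellin inversion against a smooth truncation, stationary-phase localisation of the kernel to $t\asymp X\theta$ (hence to $I_{\gamma,\rho'}$ after the $\theta$-averaging), and the Parseval-type transfer from a $\theta$-window of length $1/H$ to a $t'$-window of length $|\gamma|H$. So at the structural level your plan is correct and agrees with the cited source.

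One point to tighten: your account of the second error term is a little loose. In \cite{MRT1} the local average $H^{-1}\sum_{x\le n\le x+H}|b(n)|$ does not arise from a ``trivial pointwise estimate plus Cauchy--Schwarz'' on $T^{\mathrm{out}}$; rather it comes from a Gallagher-type inequality applied directly to the portion of $S_b(\tfrac{a}{q}+\theta)$ whose Mellin frequencies fall outside $I_{\gamma,\rho'}$ (equivalently, one undoes the Mellin transform on the tail and bounds the resulting exponential sum over a short $\theta$-interval by its time-domain local average). The prefactor $\rho'+1/(|\gamma|H)$ then records, respectively, the proportion of Mellin mass genuinely lying outside the stationary-phase window and the broadening of that window caused by the $\theta$-variation of size $1/H$. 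If you rewrite step~7 invoking Gallagher's lemma rather than a pointwise bound, the argument goes through cleanly and the bookkeeping on the $d_2(q)^4/q$ factor (two Cauchy--Schwarz applications, one over $\chi$ and one over factorisations $q=q_0q_1$, each squared) becomes routine.
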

\begin{proof} See \cite[Corollary 5.3]{MRT1}. $b(n)$ can be replaced by any function. \end{proof}
Note that the results in this section are almost identical to those in 
\cite{Kim2022s}. For the sake of completeness, we give the proof. 

\begin{Lemma} $( \thinspace $\cite[Proposition 6,1]{MRT1}, \rm{Combinatorial decomposition}$)$
Let $\varepsilon>0$ be sufficiently small.
Let $X\geq 2$, and set $X^{\frac{23}{24}+10\varepsilon} \ll H \ll X^{1-\varepsilon}.$ 
Set $\rho'=Q^{-\frac{1}{2}}$, and let $1\leq q_{1}\leq Q.$ 
Let $\gamma'$ be a quantity such that $X^{-\frac{5}{6}-2\varepsilon}\leq \gamma' \leq \frac{1}{q_{1}Q}$.
Let $\mathbf{g}:\mathbb{N}\rightarrow \mathbb{C}$ be a function  that takes one of the following forms;
\begin{description} 
\item[\normalfont{Type} $\rm{d_{1}}$ \normalfont{sum} ]
\begin{equation}\label{Typed1} \mathbf{g}= \varrho \ast \varsigma \end{equation}
for some arithmetic function $\varrho: \mathbb{N} \rightarrow \mathbb{C}\thinspace$  where $\varrho$ is supported on $[N,2N]$ and $\varrho(n) \leq d_{2}(n)^{k},$  $\varsigma=1_{(M,2M]}$ for some $N,M$ obeying the bounds
$$ 1 \ll N \ll Q^{1/2}, \; \frac{X}{Q} \ll NM \ll_{\varepsilon} X.$$ 
\item[ \normalfont{Type} $\rm{\RomanNumeralCaps{2}}$ \normalfont{sum} ] 
\begin{equation} \mathbf{g}=\varrho \ast \varsigma \end{equation}
for some arithmetic functions $\varrho, \varsigma: \mathbb{N} \rightarrow \mathbb{C}\thinspace$ where $\max\left(\varrho(n),\varsigma (n)\right) \leq d_{2}(n)^{k}$ and $\varrho$ is supported on $[N,2N]$, $\varsigma$ is supported on $[M,2M]$ for some $N,M$ satisfying the bounds
$$ Q^{1/2} \ll N \ll HX^{-\varepsilon^{2}}Q^{-1/2}, \; \frac{X}{Q} \ll NM \ll_{\varepsilon} X.$$
\end{description}
Then 
\begin{equation}\label{4cd} \int_{I_{\gamma',\rho'}} \Big(\sum_{\chi (\mathrm{mod} \thinspace q_{1})} \int_{t-\gamma' H}^{t+\gamma' H} \big| D[\mathbf{g}](\frac{1}{2}+it',\chi)\big|dt'\Big)^{2}dt \ll_{\varepsilon}q_{1}|\gamma'|^{2}H^{2}X^{1-\varepsilon^{2}+o(1)}.\end{equation}

\end{Lemma}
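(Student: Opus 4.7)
The plan is to reduce \eqref{4cd} to a mean-square estimate for Dirichlet polynomials by two applications of Cauchy-Schwarz, and then treat the two types separately using the hybrid large sieve together with Hal\'asz-type large-value inequalities. Cauchy-Schwarz on the sum over $\chi$ yields $(\sum_\chi a_\chi)^2\leq q_1\sum_\chi a_\chi^2$, and Cauchy-Schwarz on the inner integral yields $\bigl(\int_{t-\gamma'H}^{t+\gamma'H}|D|\,dt'\bigr)^2\leq 2\gamma'H\int_{t-\gamma'H}^{t+\gamma'H}|D|^2\,dt'$. After swapping the remaining integrations (which enlarges $I_{\gamma',\rho'}$ by $\gamma'H$, negligible compared to its length $\asymp \gamma'X Q^{1/2}$), it suffices to prove
\[ \sum_{\chi(\mathrm{mod}\, q_1)} \int_{\widetilde{I}} \bigl|D[\mathbf{g}](\tfrac12+it,\chi)\bigr|^2\,dt \ll_\varepsilon X^{1-\varepsilon^2+o(1)}. \]

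For the Type $d_1$ sum, factor $D[\mathbf{g}](s,\chi)=D[\varrho](s,\chi)\,D[1_{(M,2M]}](s,\chi)$. Since $\varrho$ is short ($N\ll Q^{1/2}$), Cauchy-Schwarz and the divisor bound give the cheap pointwise estimate $|D[\varrho](\tfrac12+it,\chi)|\ll N^{1/2+o(1)}\ll Q^{1/4+o(1)}$. The remaining factor $D[1_{(M,2M]}](\tfrac12+it,\chi)$ is an incomplete character sum, handled pointwise by P\'olya-Vinogradov for non-principal $\chi$ and by subconvexity of $\zeta(\tfrac12+it)$ on short intervals for $\chi=\chi_0$; summing the resulting squared bound over $\chi$ and integrating over $\widetilde{I}$ yields the required estimate, since $NM\asymp X$, $q_1|\widetilde{I}|\ll X/Q^{1/2}$, and $N\ll Q^{1/2}$ combine to produce the $X^{-\varepsilon^2}$ saving with room to spare.

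The Type II sum is the main obstacle. Naively bounding the product $D[\varrho]D[\varsigma]$ by the hybrid large sieve (applied to a polynomial of length $NM\asymp X$) gives only $X^{1+o(1)}$, which is short by a factor of $X^{-\varepsilon^2}$. Following \cite[Proposition 6.1]{MRT1} (cf.\ \cite{Kim2022s}), the plan is to dyadically decompose the $t$-aspect and apply a Hal\'asz-type large-value theorem to control the measure of the set of $(\chi,t)$ on which $|D[\varrho](\tfrac12+it,\chi)|$ is anomalously large: on this exceptional set one uses the large-value bound, off it one uses a pointwise bound, and the cross terms are absorbed by the mean-square of $D[\varsigma]$. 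The constraint $Q^{1/2}\ll N\ll HX^{-\varepsilon^2}Q^{-1/2}$ keeps $N$ away from the pivot $N\asymp X^{1/2}$ at which the Hal\'asz estimate degrades, while the hypothesis $H\geq X^{23/24+10\varepsilon}$ is exactly what is needed to close the exponent budget in the large-value numerology. The hardest step is precisely this numerology: tracking the dependence of the large-value exponents on $N$, $M$, $Q$, and $H$, and verifying uniformity for $\gamma'\in[X^{-5/6-2\varepsilon},1/(q_1Q)]$.
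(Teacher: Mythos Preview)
Your proposal has a structural misstep and two concrete gaps.

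\textbf{The wrong Cauchy--Schwarz.} You begin by applying Cauchy--Schwarz to the $\chi$-sum, losing a factor $q_{1}$, and then to the inner $t'$-integral. This reduces \eqref{4cd} to a second-moment bound for $D[\mathbf g]$ itself, and as you note, the mean-value theorem then falls short by $X^{\varepsilon^{2}}$. The paper avoids this loss entirely: since $D[\mathbf g]=D[\varrho]\,D[\varsigma]$, it applies Cauchy--Schwarz \emph{to the product inside the $\chi$-sum and inner integral}, obtaining
\[
\Bigl(\sum_{\chi}\int_{t-\gamma'H}^{t+\gamma'H}|D[\varrho]D[\varsigma]|\,dt'\Bigr)^{2}
\le
\Bigl(\sum_{\chi}\int|D[\varrho]|^{2}\Bigr)\Bigl(\sum_{\chi}\int|D[\varsigma]|^{2}\Bigr).
\]
For Type~II one bounds the first factor uniformly in $t$ by the hybrid mean-value theorem (size $q_{1}\gamma'H+N$), then integrates the second factor over $I_{\gamma',\rho'}$ (size $\gamma'H(q_{1}\rho'^{-1}\gamma'X+M)$). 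A direct check of the four cross terms, using $Q^{1/2}\ll N\ll HX^{-\varepsilon^{2}}Q^{-1/2}$ and $\gamma'\in[X^{-5/6-2\varepsilon},(q_{1}Q)^{-1}]$, gives \eqref{4cd} immediately. No Hal\'asz large-value estimate is needed.

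\textbf{Type $d_{1}$: the PV bound does not close.} The sum $D[1_{(M,2M]}](\tfrac12+it,\chi)=\sum_{M<m\le 2M}\chi(m)m^{-1/2-it}$ is not controlled by P\'olya--Vinogradov alone: partial summation introduces a factor $|t|$, and on $I_{\gamma',\rho'}$ one has $|t|$ as large as $\gamma'XQ^{1/2}$, which swamps any saving from $q_{1}^{1/2}/M^{1/2}$. The paper instead uses the \emph{fourth moment} of $D[1_{(M,2M]}]$ over characters and $t$ (the hybrid analogue of $\int|\zeta(\tfrac12+it)|^{4}\,dt$, cf.\ \cite[Corollary~2.12]{MRT1}): after Cauchy--Schwarz one needs $\sum_{\chi}\int_{T/2}^{T}|D[\varsigma]|^{4}\,dt$, and this supplies the missing power saving that the second moment alone cannot give when $N$ is as small as $Q^{1/2}$.

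\textbf{Type II: the pivot claim is false.} You assert that $Q^{1/2}\ll N\ll HX^{-\varepsilon^{2}}Q^{-1/2}$ keeps $N$ away from $X^{1/2}$. With $Q=X^{1/12-10\varepsilon}$ this range is roughly $[X^{1/24},X^{23/24}]$, which contains $X^{1/2}$; so the Hal\'asz numerology would have to be checked there, and your proposal does not do this. Since the paper's bilinear mean-value argument handles the full range with no large-value input, this entire detour is unnecessary.
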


\begin{proof}
The proof of this basically follows from the arguments in \cite[Proposition 6.1]{MRT1}. Let $J'=[\rho' \gamma' X, \rho'^{-1} \gamma' X].$ Consider the Type $\rm{\RomanNumeralCaps{2}}$ case. Since $g = \varrho \ast \varsigma$, we have
\begin{equation} D[\mathbf{g}](\frac{1}{2}+it',\chi)=D[\varrho](\frac{1}{2}+it',\chi) D[\varsigma](\frac{1}{2}+it',\chi). \end{equation}
By using the Cauchy-Schwarz inequality, we have
\begin{equation}\begin{split}\Big(\sum_{\chi  (\mathrm{mod} \thinspace q_{1})} \int_{t-\gamma' H}^{t+\gamma' H} \big|D[\mathbf{g}](\frac{1}{2}+it',\chi)\big|dt'\Big)^{2} &\ll \Big(\sum_{\chi(\mathrm{mod} \thinspace q_{1})} \int_{t-\gamma' H}^{t+\gamma' H} \big|D[\varrho](\frac{1}{2}+it',\chi)\big|^{2}dt' \Big) \\& \times \Big( \sum_{\chi (\mathrm{mod} \thinspace q_{1})} \int_{t-\gamma' H}^{t+\gamma' H} \big|D[\varsigma](\frac{1}{2}+it',\chi)\big|^{2}dt'\Big). \nonumber \end{split}\end{equation}
Using the mean value theorem (\cite[Lemma 2.10]{MRT1}) and the Fubini theorem, we see that
\begin{equation}\begin{split}
&\sum_{\chi(\mathrm{mod} \thinspace q_{1})} \int_{t-\gamma' H}^{t+\gamma' H} \big|D[\varrho](\frac{1}{2}+it',\chi)\big|^{2}dt' \\&\ll (q_{1}\gamma' H + N) (\log q_{1}\gamma' HN)^{3}\sum_{N \leq n \leq 2N} \frac{(d_{2}(n)^{2k}}{n},\\&
\int_{ I_{\gamma',\rho'}} \sum_{\chi (\mathrm{mod} \thinspace q_{1})} \int_{t-\gamma' H}^{t+\gamma' H} | D[\varsigma](\frac{1}{2}+it',\chi)|^{2}dt'dt \\& \ll (\log q_{1}\gamma' HN)^{3}|\gamma' H|(q_{1}\rho'^{-1}\gamma' X +M)\sum_{M \leq m \leq 2M} \frac{d_{2}(m)^{2k}}{m}.\end{split} \nonumber\end{equation}
Applying Shiu's Theorem (see \cite[Lemma 2.3,(i)]{MRT2}), we have
\begin{equation}\label{logpowerterm} (\log q_{1}\gamma' HN)^{3}\sum_{N \leq n \leq 2N} \frac{d_{2}(n)^{2k}}{n} \ll (\log X)^{2^{2k}+2}. \end{equation}
Therefore, the left-hand side of \eqref{4cd} is bounded by 
\begin{equation}\begin{split} & \ll_{\varepsilon}q_{1}(\rho'^{-1} q_{1} \gamma' + \frac{Q^{\frac{1}{2}}N}{H}+\frac{1}{N}+ \frac{1}{q_{1}\gamma' H})\gamma'^{2}H^{2}X(\log X)^{2^{4k}+4}\\ &\ll_{\varepsilon} q_{1}\left( Q^{-\frac{1}{2}} +\frac{Q^{\frac{1}{2}}N}{H} + \frac{1}{N} + \frac{X^{\frac{5}{6}+2\epsilon}}{H}\right)\gamma'^{2}H^{2}X(\log X)^{2^{4k}+4} \\& \ll q_{1} \gamma'^{2}H^{2}X^{1-\varepsilon^{2}+o(1)}. \end{split}\end{equation} 
Consider the Type $\rm{d_{1}}$ case.
Let $g=\varrho \ast \varsigma \ast 1_{\{1\}}.$ Then $$D[\mathbf{g}](\frac{1}{2}+it', \chi)=D[\varrho](\frac{1}{2}+it', \chi)D[\varsigma](\frac{1}{2}+it', \chi)D[1_{\{1\}}](\frac{1}{2}+it', \chi).$$
By the Cauchy-Schwarz inequality, we have
\begin{equation} \begin{split} 
\Big(\sum_{\chi (\mathrm{mod} \thinspace q_{1})} \int_{t-\gamma' H}^{t+\gamma' H} |&D[\mathbf{g}](\frac{1}{2}+it',\chi)|dt'\Big)^{2} \ll \Big( \sum_{\chi(\mathrm{mod} \thinspace q_{1})} \int_{t-\gamma' H}^{t+\gamma' H} |D[\varrho](\frac{1}{2}+it',\chi)|^{2}dt' \Big) \\&\times \Big(\sum_{\chi (\mathrm{mod} \thinspace q_{1})} \int_{t-\gamma' H}^{t+\gamma' H}  |D[\varsigma](\frac{1}{2}+it',\chi)D[1_{\{1\}}](\frac{1}{2}+it',\chi)|^{2} dt'\Big).  \nonumber
\end{split} \end{equation}
Using the bound 
 \begin{equation} \sum_{\chi(\mathrm{mod} \thinspace q_{1})} \int_{t-\gamma' H}^{t+\gamma' H} |D[\varrho](\frac{1}{2}+it',\chi)|^{2}dt' \ll (q_{1}\gamma' H + N) (\log X)^{2^{2k}+2}, \nonumber\end{equation}
the left-hand side of \eqref{4cd} is bounded by 
$$(q_{1}\gamma' H+N)(\log X)^{2^{2k}+2}\int_{ I_{\gamma',\rho'}} \sum_{\chi  (\mathrm{mod} \thinspace q_{1})} \int_{t-\gamma' H}^{t+\gamma' H} |D[\varsigma](\frac{1}{2}+it',\chi)D[1_{\{1\}}](\frac{1}{2}+it',\chi)|^{2} dt'dt.$$
By the pigeonhole principle, the above term is bounded by 
$$ (q_{1}\gamma' H+N)(\log X)^{2^{2k}+3} \gamma' H\int_{\frac{T}{2}}^{T} \sum_{\chi  (\mathrm{mod} \thinspace q_{1})}|D[\varsigma](\frac{1}{2}+it,\chi)|^{2}|D[1_{\{1\}}](\frac{1}{2}+it,\chi)|^{2} dt $$
for some $T \in J'\:$ where the absolute constant depends on $\varepsilon.$
Using the fourth moment estimate (see \cite[Corollary 2.12]{MRT1}) and $q_{1} \leq Q=X^{1/12-10\varepsilon},$ we see that
\begin{equation} \begin{split}\sum_{\chi  (\rm{mod} \thinspace \it{q_{1}})} \int_{\frac{T}{2}}^{T} |D[\varsigma](\frac{1}{2}+it,\chi)|^{4}dt &\ll_{\varepsilon} q_{1}T(1+\frac{q_{1}^{2}}{T^{2}}+\frac{M^{2}}{T^{4}})e^{(\log q_{1})^{\frac{1}{2}}}(\log q_{1}T)^{404}. \end{split}\end{equation}
By the trivial bound $|D[1_{\{1\}}](\frac{1}{2}+it,\chi)| \leq 1,$ we have
$$\sum_{\chi  (\mathrm{mod} \thinspace q_{1})} \int_{\frac{T}{2}}^{T} |D[1_{\{1\}}](\frac{1}{2}+it,\chi)|^{4}dt \ll_{\varepsilon}  q_{1}T .$$
By applying the Cauchy-Schwarz inequality, we see that
$$ \sum_{\chi  (\mathrm{mod} \thinspace q_{1})} \int_{t-\gamma' H}^{t+\gamma' H}  |D[\varsigma](\frac{1}{2}+it',\chi)|^{2}|D[1_{\{1\}}](\frac{1}{2}+it',\chi)|^{2} dt' \ll_{\varepsilon} q_{1}T(1+\frac{q_{1}}{T}+\frac{M}{T^{2}}) e^{(\log q_{1})^{\frac{1}{2}}/2}(\log X)^{202.5}.$$
From the conditions in Lemma \ref{Lemma 4.1}  and \eqref{Typed1}, we have $$ T \leq \frac{1}{q_{1}Q^{\frac{1}{2}}}X, \thinspace T^{-1} \leq Q^{1/2}X^{5/6+2\varepsilon-1},$$
$$ \gamma' \leq \frac{1}{q_{1}Q}, \thinspace \gamma'^{-1} \leq X^{5/6+2\varepsilon}, $$
$$\; N \leq Q^{1/2}, \thinspace M \leq X.$$
 Therefore, the left-hand side of \eqref{4cd} is bounded by 
\begin{equation}\label{secondH}\begin{split} &\ll_{\varepsilon} q_{1}(q_{1}T+\frac{TN}{\gamma' H})(1+\frac{q_{1}}{T}+\frac{M}{T^{2}})\gamma'^{2} H^{2}  (\log X)^{202.5+2^{2k}+3} e^{(\log X)^{1/2}/2}\\&\ll_{\varepsilon} q_{1}\gamma'^{2}H^{2} (\log X)^{202.5+2^{2k}+2} e^{(\log X)^{1/2}/2} (q_{1}T+ q_{1}^{2}+q_{1}\frac{M}{T}+\frac{TN}{\gamma' H}+ \frac{q_{1}N}{\gamma' H}+\frac{X}{\gamma' HT})
\\&\ll_{\varepsilon} q_{1}\gamma'^{2}H^{2} X^{1-\varepsilon^{2}+o(1)}.\end{split} \nonumber \end{equation} 
\end{proof}

\subsection{Bounding the first term of \eqref{longintegral}}
\begin{Lemma}\label{Lemma 4.5} Let $(a,q)=1,$ and let $\gamma, \rho'$ be real numbers such that \\
$\gamma \in [X^{-\frac{5}{6}-2\varepsilon},$ $\frac{1}{qQ}], \rho'=Q^{-\frac{1}{2}}.$ Let  $X^{\frac{23}{24}+10\varepsilon} \ll H \ll X^{1-\varepsilon},$ and let 
\begin{equation}
I_{\gamma,\rho'}:=  \{ t \in \mathbb{R}: \rho'|\gamma|X \leq |t| \leq \frac{|\gamma|X}{\rho'} \}.
\end{equation}
Then 
 \begin{equation}\begin{split} \frac{d_{2}(q)^{4}}{q|\gamma|^{2}H^{2}}& \sup_{q=q_{0}q_{1}} \int_{ I_{\gamma,\rho'}} \Big(\sum_{\chi(\mathrm{mod} \thinspace q_{1})}\int_{t-|\gamma|H}^{t+|\gamma|H} \big|\sum_{\frac{X}{q_{0}} \leq n \leq \frac{2X}{q_{0}}} \frac{b(q_{0}n)\chi(n)}{n^{\frac{1}{2}+it'}}\big| dt'\Big)^{2}dt \\&\ll_{\varepsilon} X^{1-\varepsilon^{2}+o(1)}.\end{split}\end{equation} 
\end{Lemma}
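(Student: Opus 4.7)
The plan is to decompose the inner Dirichlet polynomial $\sum_{X/q_{0} \leq n \leq 2X/q_{0}} b(q_{0}n)\chi(n) n^{-1/2-it'}$ into a bounded number of pieces, each of Type $\rm{d_{1}}$ or Type $\RomanNumeralCaps{2}$ in the sense of the preceding Combinatorial Decomposition Lemma, and then invoke that lemma termwise. The key fact to exploit is that $b = (g\cdot 1_{[1,Y]}) \ast 1$ as Dirichlet convolutions, where $Y = HQ^{-1/2}X^{-\varepsilon^{2}}$ is exactly the upper scale cutoff permitted for Type $\RomanNumeralCaps{2}$ sums in that lemma.

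First, I would handle the shift by $q_{0}$. Writing each divisor $d \mid q_{0}n$ with $d \leq Y$ uniquely as $d = d_{0}d_{1}$ with $d_{0} \mid q_{0}^{\infty}$ and $(d_{1},q_{0})=1$, we reindex so that the inner sum runs over $n$ coprime to $q_{0}$, with an outer sum over the $d_{2}(q_{0})^{O(1)} = X^{o(1)}$ divisors $d_{0}$ of $q_{0}^{\infty}$. For each such $d_{0}$, the resulting object is of the shape $(g_{d_{0}} \cdot 1_{[1,Y/d_{0}]}) \ast 1$ evaluated at variables of size $X/(q_{0}d_{0})$, which satisfies the scale constraint $\frac{X}{Q} \ll NM \ll X$ required by the lemma after absorbing $q_{0} \leq Q$.

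Next, I would apply a Heath-Brown identity of bounded level to $g \cdot 1_{[1,Y]}$ and dyadically split each resulting convolution. Grouping factors into two lumps $\varrho$ supported in $[N,2N]$ and $\varsigma$ supported in $[M,2M]$ with $NM \asymp X/(q_{0}d_{0})$, every combinatorial piece falls into one of two regimes: either $N \leq Q^{1/2}$, in which case $\varsigma$ can be taken as an indicator and we are in the Type $\rm{d_{1}}$ case, or $Q^{1/2} < N$, in which case the support restriction $d \leq Y$ inherited from $b$ ensures $N \leq Y = HX^{-\varepsilon^{2}}Q^{-1/2}$, placing us in the admissible Type $\RomanNumeralCaps{2}$ range. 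Each of the $O((\log X)^{C})$ resulting pieces is then controlled by \eqref{4cd}, giving
$$\int_{I_{\gamma,\rho'}} \Big(\sum_{\chi (\mathrm{mod} \thinspace q_{1})} \int_{t-|\gamma|H}^{t+|\gamma|H} |D[\mathbf{g}](\tfrac{1}{2}+it',\chi)|\,dt'\Big)^{2}dt \ll_{\varepsilon} q_{1}|\gamma|^{2}H^{2}X^{1-\varepsilon^{2}+o(1)}.$$
Combining this bound with the prefactor $d_{2}(q)^{4}/(q|\gamma|^{2}H^{2})$ and using $q_{1}/q = 1/q_{0} \leq 1$ together with $d_{2}(q)^{4} = X^{o(1)}$ yields the claimed $X^{1-\varepsilon^{2}+o(1)}$.

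The main obstacle will be Step 2: carefully verifying that the reindexing under $n \mapsto q_{0}n$ preserves the scale conditions and that coprimality constraints on $(n,q_{1})$ introduced by the characters $\chi$ mod $q_{1}$ only cost $X^{o(1)}$ through Shiu-type estimates; one must also check that summing over the $O(d_{2}(q_{0}^{\infty})) = X^{o(1)}$ choices of $d_{0}$ and the $O((\log X)^{C})$ Heath-Brown pieces stays well below the $X^{\varepsilon^{2}/2}$ budget before the final combination. Once this bookkeeping is in place, the estimate follows directly from the Combinatorial Decomposition Lemma.
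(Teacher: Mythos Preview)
Your overall strategy---reduce to Type $\rm{d_{1}}$ and Type $\rm{\RomanNumeralCaps{2}}$ pieces and invoke the Combinatorial Decomposition Lemma---matches the paper's, but Step 2 has a genuine gap. You propose to apply a Heath--Brown identity to $g \cdot 1_{[1,Y]}$, yet no such identity is available for a general $g \in \mathcal{F}_{k}$: Heath--Brown's identity is specific to $\Lambda$ (or $\mu$), whereas $\mathcal{F}_{k}$ contains, for instance, normalized Fourier coefficients of cusp forms, which admit no combinatorial decomposition into convolutions of elementary factors. Even if some identity were at hand, your subsequent regrouping into lumps $\varrho \ast \varsigma$ would not guarantee that $\varsigma$ is a pure indicator $1_{(M,2M]}$, which the Type $\rm{d_{1}}$ case of the lemma explicitly requires.

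The paper's argument is in fact simpler than what you propose and bypasses this issue entirely. Since $b = (g \cdot 1_{[1,Y)}) \ast 1$ is already a two-fold convolution whose second factor is the constant $1$, a direct dyadic split of the $g$-factor gives
$$b\,1_{(X/2,2X]} = \sum_{0 \leq l \leq \log_{2}Y} \bigl(g \cdot 1_{[2^{l},2^{l+1}]}\bigr) \ast 1_{[X/2^{l+1},X/2^{l}]}.$$
Each summand is then exactly of the shape $\varrho \ast \varsigma$ with $\varrho = g \cdot 1_{[2^{l},2^{l+1}]}$ (bounded by $d_{2}(n)^{k}$) and $\varsigma$ a genuine indicator; it is Type $\rm{d_{1}}$ when $2^{l}$ is small and Type $\rm{\RomanNumeralCaps{2}}$ when $2^{l}$ is larger, the upper cutoff $2^{l} \leq Y = HQ^{-1/2}X^{-\varepsilon^{2}}$ being precisely the Type $\rm{\RomanNumeralCaps{2}}$ constraint on $N$. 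The $q_{0}$-shift is handled much as you outline, via multiplicativity of $g$ and a sum over divisors of $q_{0}$ (costing $d_{2}(q_{0})^{O_{k}(1)}$). So your Step~1 and the final combination are essentially correct; replace the Heath--Brown step with this plain dyadic split and the proof goes through.
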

\begin{proof}

For simplicity, we use $b$ to denote the arithmetic function $n \rightarrow b(n)$ and  $g$ to denote the arithmetic function $n \rightarrow g(n).$ Consider $b1_{(\frac{X}{2},2X]}\thinspace$ instead of $\thinspace b1_{(X,2X]}.$
$b1_{(\frac{X}{2},2X]}$ can be expressed as the following sum
\begin{equation}
\begin{split}
b1_{(\frac{X}{2},2X]}(q_{0}n)= &\sum_{0 \leq l \leq \log_{2}X}  g 1_{[2^{l},2^{l+1}]}\ast 1_{[\frac{X}{2^{l+1}},\frac{X}{2^{l}}]}(q_{0}n)
\\=&\sum_{0\leq l \leq \log_{2}(X^{-\varepsilon^{2}}HQ^{-1/2})}  g 1_{[2^{l},2^{l+1}]} \ast 1_{[\frac{X}{2^{l+1}},\frac{X}{2^{l}}]}(q_{0}n).
\end{split}
\nonumber
\end{equation}
Let $l_{1}$ be the largest integer such that 
$2^{l_{1}} \leq X^{\varepsilon^{2}}$, and let $l_{2}$ be the largest integer such that $2^{l_{2}} \leq X^{-\varepsilon^{2}}HQ^{-1/2}$.  Then \begin{description}
\item[\normalfont{For} $0\leq l \leq l_{1}$]$g 1_{[2^{l},2^{l+1}]}\ast 1_{[\frac{X}{2^{l+1}},\frac{X}{2^{l}}]}$ are Type $\rm{d_{1}}$ sums $\varrho \ast \varsigma$ such that $$\varrho= g 1_{[2^{l},2^{l+1}]},\; \varsigma= 1_{[\frac{X}{2^{l+1}},\frac{X}{2^{l}}]}.$$ 
\item[\normalfont{For} $ l_{1} < l \leq l_{2}$]  $g 1_{[2^{l},2^{l+1}]} \ast 1_{[\frac{X}{2^{l+1}},\frac{X}{2^{l}}]}$ are Type $\rm{\RomanNumeralCaps{2}}$ sums $\varrho \ast \varsigma$ such that $$\varrho= g 1_{[2^{l},2^{l+1}]},\; \varsigma= 1_{[\frac{X}{2^{l+1}},\frac{X}{2^{l}}]}.$$\end{description}
Note that
\begin{equation}\label{4chilambda}\sum_{n=1}^{\infty} \frac{\chi(n)g 1_{[2^{l},2^{l+1}]}\ast  1_{[\frac{X}{2^{l+1}},\frac{X}{2^{l}}]}(q_{0}n)}{n^{\frac{1}{2}+it}}= \sum_{d|q_{0}}\sum_{(n,q_{0})=d} \frac{\chi(n)g1_{[2^{l},2^{l+1}]}\ast  1_{[\frac{X}{2^{l+1}},\frac{X}{2^{l}}]}(q_{0}n)}{n^{\frac{1}{2}+it}}.
\end{equation}
For simplicity, assume that $q_{0}$ is square-free. By using the multiplicativity of $g$, the absolute value of \eqref{4chilambda} is bounded by 
\begin{equation}
\sum_{d|q_{0}} \Big|g1_{[2^{l},2^{l+1}]}\ast  1_{[\frac{X}{2^{l+1}},\frac{X}{2^{l}}]}(dq_{0})\frac{\chi(d)}{d^{\frac{1}{2}+it}}\Big|\Big|\sum_{(k,\frac{q_{0}}{d})=1} \frac{\chi(k)g1_{[2^{l},2^{l+1}]}\ast  1_{[\frac{X}{2^{l+1}},\frac{X}{2^{l}}]}(k)}{k^{\frac{1}{2}+it}}\Big|.   \end{equation}
Using the divisor bound, we have
\begin{equation}\Big|g1_{[2^{l},2^{l+1}]}\ast  1_{[\frac{X}{2^{l+1}},\frac{X}{2^{l}}]}(dq_{0})\frac{\chi(d)}{d^{\frac{1}{2}+it}}\Big| \ll d_{2}(q_{0})^{k+1}. \end{equation}
Therefore, $\sum_{\frac{X}{q_{0}} \leq n \leq \frac{2X}{q_{0}}} \frac{b(q_{0}n)\chi(n)}{n^{\frac{1}{2}+it'}}$ is bounded by a linear combination (with coefficients of size $d_{2}(q_{0})^{k+1}$) of $O\big((\log_{2}X) d_{2}(q_{0})\big)$ absolute values of the following terms
\begin{equation}\label{4conditionk}\Big|\sum_{(k,\frac{q_{0}}{d})=1} \frac{\chi(k)g1_{[2^{l},2^{l+1}]}\ast  1_{[\frac{X}{2^{l+1}},\frac{X}{2^{l}}]}(k)}{k^{\frac{1}{2}+it}}\Big|.\end{equation}
Let $\chi_{\frac{q_{0}}{d},0}$ be the principal character modulo $\frac{q_{0}}{d}.$ By replacing the character $\chi$ with $\chi_{\frac{q_{0}}{d},0}\:\chi,$ we can remove the condition on $k$ under the summation \eqref{4conditionk}. So we use the conductor $\frac{q}{d}$ instead of $q_{1}.$  
Applying Lemma \ref{Lemma 4.3}  on  $\chi_{\frac{q_{0}}{d},0}(k) \chi(k)g1_{[2^{l},2^{l+1}]}\ast  1_{[\frac{X}{2^{l+1}},\frac{X}{2^{l}}]}(k),$ we see that 
\begin{equation}
d_{2}(q_{0})^{2k+2} \int_{ I_{\gamma,\rho'}} \Big(\sum_{\chi (\mathrm{mod}\thinspace q_{1})} \int_{t-\gamma H}^{t+\gamma H} \Big|\sum_{k=1}^{\infty} \frac{\chi_{\frac{q_{0}}{d},0}(k)\chi(k)g1_{[2^{l},2^{l+1}]}\ast  1_{[\frac{X}{2^{l+1}},\frac{X}{2^{l}}]}(k)}{k^{\frac{1}{2}+it}}\Big|dt'\Big)^{2}dt \nonumber\\ \end{equation} $$\ll_{\varepsilon,B}d_{2}(q_{0})^{2k+2}\frac{q}{d}|\gamma|^{2}H^{2}X^{1-\varepsilon^{2}+o(1)}.$$
In the same manner we obtain the same upper bound when $q_{0}$ is not square-free.  
Hence, the first term in Lemma \ref{Lemma 4.3}  is bounded by $X^{1-\varepsilon^{2}+o(1)}.$ \end{proof}
\subsection{Bounding the second term of \eqref{longintegral}}
\begin{Lemma}\label{Lemma 4.6} Let $\gamma \in [X^{-\frac{5}{6}-2\varepsilon}, \frac{1}{qQ}],$  $X^{\frac{23}{24}+10\varepsilon} \ll H \ll X^{1-\varepsilon}, $and let $\rho'=Q^{-\frac{1}{2}}.$ Then
\begin{equation}\label{42}
\Big(\rho' +\frac{1}{|\gamma|H}\Big)^{2}\int_{X}^{2X} \big(\frac{1}{H} \sum_{x \leq n \leq x+H}|b(n)|\big)^{2}dx\ll X^{1-\varepsilon^{2}+o(1)}.
\end{equation}
\end{Lemma}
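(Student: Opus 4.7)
The plan is to bound the integrand pointwise using the divisor bound for $b(n)$ together with Shiu's theorem, and then check that the prefactor $(\rho' + 1/(|\gamma| H))^2$ is small enough under the hypotheses on $|\gamma|$, $H$, and $Q$ to absorb the logarithmic loss.

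First, I would observe that since $g \in \mathcal{F}_k$ satisfies the Ramanujan conjecture we have $|g(m)| \leq d_k(m)$, and therefore, by the Dirichlet-convolution identity $d_k \ast 1 = d_{k+1}$,
\[
|b(n)| \;\leq\; \sum_{m \mid n} d_k(m) \;=\; d_{k+1}(n).
\]
Since $H \geq X^{23/24+10\varepsilon}$ is well above the Shiu threshold, Shiu's theorem \cite{Shiu1} yields
\[
\sum_{x \leq n \leq x+H} d_{k+1}(n) \;\ll\; H (\log X)^{k}
\]
uniformly for $x \in [X,2X]$. Dividing by $H$ and squaring gives $\big(H^{-1}\sum_{x \leq n \leq x+H}|b(n)|\big)^2 \ll (\log X)^{2k}$, so integrating over $[X,2X]$ yields
\[
\int_{X}^{2X} \Big(\tfrac{1}{H}\sum_{x \leq n \leq x+H} |b(n)|\Big)^{2}\,dx \;\ll\; X (\log X)^{2k} \;=\; X^{1+o(1)}.
\]

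Next I would handle the prefactor. Since $\rho' = Q^{-1/2}$ we have $\rho'^2 = Q^{-1} = X^{-1/12+10\varepsilon}$, while the lower bounds $|\gamma| \geq X^{-5/6-2\varepsilon}$ and $H \geq X^{23/24+10\varepsilon}$ give
\[
\frac{1}{|\gamma| H} \;\leq\; X^{5/6+2\varepsilon - 23/24 - 10\varepsilon} \;=\; X^{-1/8 - 8\varepsilon},
\]
which is much smaller than $\rho'$. Hence $(\rho' + 1/(|\gamma| H))^2 \ll \rho'^2 = Q^{-1}$.

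Combining the two bounds, the left-hand side of \eqref{42} is
\[
\ll\; Q^{-1} \cdot X^{1+o(1)} \;=\; X^{11/12 + 10\varepsilon + o(1)}.
\]
Since $\varepsilon$ is taken to be sufficiently small so that $10\varepsilon + \varepsilon^2 < 1/12$, this is $\ll X^{1-\varepsilon^2 + o(1)}$, as required. There is no substantive obstacle here: all savings come from the prefactor $Q^{-1}$, which is exactly why the choice $Y = HQ^{-1/2}X^{-\varepsilon^2}$ in Lemma \ref{Lemma 4.1} was tuned so that the short-divisor piece $b(n)$ picks up only a logarithmic loss after Shiu, leaving room to absorb all $\varepsilon$-powers into the prefactor.
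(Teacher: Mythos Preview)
Your proof is correct. It takes a somewhat different, and in fact more direct, route than the paper's own argument.

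The paper first applies Cauchy--Schwarz in the form
\[
\Big(\tfrac{1}{H}\sum_{x\le n\le x+H}|b(n)|\Big)^{2}\le \tfrac{1}{H}\sum_{x\le n\le x+H}|b(n)|^{2},
\]
integrates in $x$ to reduce to $\sum_{X\le n\le 2X}|b(n)|^{2}$, and then expands the square and pushes through divisor-type bounds of the shape $d_{2}(m)^{2k+1}$ to arrive at $X(\log X)^{O_{k}(1)}$. You instead bound $|b(n)|\le d_{k+1}(n)$ pointwise, apply Shiu's theorem directly to $\sum_{x\le n\le x+H}d_{k+1}(n)$, and obtain the same $X^{1+o(1)}$ for the integral without ever passing to the second moment. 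Both routes land on the same estimate for the integral, and the prefactor analysis (dominated by $\rho'^{2}=Q^{-1}$, with $1/(|\gamma|H)\le X^{-1/8-8\varepsilon}$ negligible in comparison) is identical. Your approach is shorter; the paper's has the minor advantage that the Cauchy--Schwarz step would still work if one needed to exploit the truncation $m<HQ^{-1/2}X^{-\varepsilon^{2}}$ in $b(n)$ more carefully, but here that is unnecessary since the full divisor bound already suffices.
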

\begin{proof}
By using the Cauchy-Schwarz inequality, we have
\begin{equation}
\begin{split}
\int_{X}^{2X} \big(\frac{1}{H} \sum_{x \leq n \leq x+H}|b(n)|\big)^{2}dx&\ll\frac{1}{H} \int_{X}^{2X} \sum_{x \leq n \leq x+H}\big|\sum_{m|n}g(m) 1_{[HQ^{-1/2}X^{-\varepsilon^{2}},2X]}(m)\big|^{2} dx
\\&\ll \sum_{X \leq n \leq 2X}\big|\sum_{m|n}g(m)1_{[HQ^{-1/2}X^{-\varepsilon^{2}},2X]}(m)\big|^{2}.
\end{split}
\nonumber
\end{equation}
 Using the inequality $d_{2}(mn) \ll d_{2}(m)d_{2}(n),$ 
\begin{equation}
\begin{split} 
\sum_{X \leq n \leq 2X}\big|\sum_{m|n}g(m)1_{[HQ^{-1/2}X^{-\varepsilon^{2}},2X]}\big|^{2} &\ll \sum_{X \leq n \leq 2X} d_{2}(n) (\sum_{m|n}d_{2}(m)^{2k}1_{[HQ^{-1/2}X^{-\varepsilon^{2}},2X]}(m))
\\& \ll \sum_{HQ^{-1/2}X^{-\varepsilon^{2}} \leq m \leq 2X } d_{2}(m)^{2k} d_{2}(m)\sum_{\frac{X}{m} \leq n \leq \frac{2X}{m}} d_{2}(n)
\\& \ll \sum_{HQ^{-1/2}X^{-\varepsilon^{2}} \leq m \leq 2X} d_{2}(m)^{2k+1}  \frac{X}{m} \log X
\\& \ll X(\log X)^{2} \max_{HQ^{-1/2}X^{-\varepsilon^{2}}\leq M \leq 2X} \frac{\sum_{M \leq m \leq 2M} d_{2}(m)^{2k+1}}{M}.
\end{split}\nonumber\end{equation}
Since \begin{equation}\frac{\sum_{M<m<2M} d_{2}(m)^{2k+1}}{M} \ll (\log M)^{2^{2k+1}-1}, \;\frac{1}{\gamma H} \leq X^{\frac{5}{6}+2\varepsilon}X^{-\frac{23}{24}-10\varepsilon}\ll_{\varepsilon} X^{-\frac{1}{8}-8\varepsilon}, \nonumber \end{equation}
\eqref{42} is bounded by $X^{1-\varepsilon^{2}+o(1)}.$
\end{proof}
\begin{Proof-non5}
Combining Lemma \ref{Lemma 4.2}, Lemma \ref{Lemma 4.5}  and Lemma \ref{Lemma 4.6} , we get 
\begin{equation} \begin{split}\int_{m \cap [\theta -\frac{1}{2H}, \theta+\frac{1}{2H}]}
 |S_{f}(\alpha)|^{2} d\alpha  &\ll_{f,k,\varepsilon} X^{3+3\varepsilon^{2}}H^{-2} Q^{-1-2\varepsilon^{2}}+ X^{1-\varepsilon^{2}+o(1)}. \end{split} \nonumber \end{equation}
 Since $Q=X^{1/12-10\varepsilon}$ and $H \gg X^{23/24+10\varepsilon},$ the left-hand side of the above inequality is bounded by 
 $ X^{1-\varepsilon^{2}+o(1)}.$
 \qed
\end{Proof-non5}

\bibliographystyle{plain}   
\bibliography{over}  

\end{document}